\newtheorem{theorem}{Theorem}[section]
\newtheorem{lemma}[theorem]{Lemma}
\newtheorem{proposition}[theorem]{Proposition}
\newtheorem{corollary}[theorem]{Corollary}
\newtheorem{remark}[theorem]{Remark}
\newtheorem{example}[theorem]{Example}
\newcounter{as}[section]
\renewcommand{\a}{\alpha}
\renewcommand{\b}{\beta}
\newcommand{\ga}{\gamma}
\newcommand{\si}{\sigma}
\newcommand{\om}{\omega}
\newcommand{\De}{\Delta}
\newcommand{\La}{\Lambda}
\newcommand{\Om}{\Omega}
\newcommand{\lan}{\langle}
\newcommand{\ran}{\rangle}
\renewcommand{\tilde}{\widetilde}
\def\R{{\mathbb{R}}}
\def\X{{\mathbb{X}}}
\def\N{{\mathbb{N}}}
\def\Z{{\mathbb{Z}}}
\def\B{{\mathcal{B}}}
\def\L{{\mathcal{L}}}
\def\K{{\mathcal{K}}}
\def\F{{\mathcal{F}}}
\def\rank{\mathrm{rank\ }}
\def\tm{{\lfloor t \rfloor_m}}
\begin{document}

\title{Limit theorems for random cubical homology}
\author{Yasuaki Hiraoka and Kenkichi Tsunoda}

\address{\noindent Advanced Institute for Materials Research, Tohoku University, 2-1-1, Katahira, Aoba-ku, Sendai, 980-8577, Japan. 
\newline Center for Advanced Intelligence Project, RIKEN, Tokyo, 103-0027, Japan.
\newline e-mail: \rm \texttt{hiraoka@tohoku.ac.jp} }

\address{\noindent Department of Mathematics, Graduate School of Science,
Osaka University, 1-1, Machikaneyama-cho, Toyonaka, Osaka, 560-0043, Japan.
\newline Center for Advanced Intelligence Project, RIKEN, Tokyo, 103-0027, Japan.
\newline e-mail:  \rm \texttt{k-tsunoda@math.sci.osaka-u.ac.jp}}

\subjclass[2010]{{60D05, 52C99, 60F05, 60F15}}

\keywords{Random topology, Cubical complex, Cubical homology, Betti number}

\begin{abstract}
This paper studies random cubical sets in $\R^d$. Given a cubical set $X \subset \R^d$, a random variable $\omega_Q\in[0,1]$ is assigned for each elementary cube $Q$ in $X$, and a random cubical set $X(t)$ is defined by the sublevel set of $X$ consisting of elementary cubes with $\omega_Q\leq t$ for each  $t\in[0,1]$. Under this setting,  the main results of this paper show the limit theorems (law of large numbers and central limit theorem) for Betti numbers and lifetime sums of random cubical sets and filtrations. In addition to the limit theorems, the positivity of the limiting Betti numbers is also shown. 
\end{abstract}

\maketitle

\section{Introduction}\label{sec:intro}
The mathematical subject studied in this paper is motivated by imaging science. Objects in $\R^d$ are usually represented by {\em cubical sets}, which are the union of pixels or higher dimensional voxels called elementary cubes, and those digitalized images are used as the input for image processing. For example, image recognition techniques identify patterns and characteristic shape features embedded in those digital images. Recent progress on computational topology \cite{eh,kmm} 
allows us to utilize {\em homology} as a descriptor of the images. Here, homology is an algebraic tool to study holes in geometric objects, and it enables us to extract global topological features in data (e.g., \cite{akkmop,kimura,kms}).  In particular, the mathematical framework mentioned above is called {\em cubical homology}, which will be briefly explained in Subsection \ref{ch} (see \cite{kmm} for details).

In applications, digital images usually contain measurement/quantization noise, and hence it is important to estimate the effect of randomness on cubical homology. Furthermore, by studying the asymptotic behaviors of randomized cubical homology, we can understand the (homological) structures in the images as a difference from the random states. {\em Random topology} is a mathematical subject to study these problems, and it is a new branch of mathematics that has emerged in the intersection between algebraic topology and probability theory. The reader may refer to the survey papers \cite{bk,k} and the references therein for further details. 

In this paper, we study several limit theorems for random cubical homology. Our model, which will be precisely explained in Subsection \ref{rcs},  assigns a random variable $\omega_Q$ on $[0,1]$ from a probability measure $P$ for each elementary cube $Q$ in a cubical set $X\subset \R^d$. Then, for each $t$, we construct a random cubical set as a sublevel set $X(t)=\{Q\in X\colon\omega_Q\leq t\}$. This is a natural higher dimensional generalization of the classical bond percolation model \cite{g} in which the elementary cubes only consist of vertices and edges in $\Z^d$. This model also contains the Bernoulli random cubical complex \cite{hs2}, which is an analog of the Linial-Meshulam random simplicial complex \cite{lm}. Furthermore, the sequence $\{X(t)\}_{0\leq t\leq 1}$ can be regarded as a random filtration\footnote{In this paper, the term \lq\lq filtration\rq\rq\  means an increasing family of cubical complexes as usual in topology.} and this naturally connects to the concept of {\em persistent homology} \cite{elz,zc}. 

Under this setting, we study the law of large numbers (LLN) and the central limit theorem (CLT) for the Betti numbers $\beta_q(t):=\beta_q(X(t))$. We also study the asymptotic behavior of the lifetime sum of persistent homology. Here, given a filtration such as $\{X(t)\}_{0\leq t\leq 1}$, the $q$-th persistent homology characterizes each $q$-dimensional hole $c$ by encoding its birth and death parameters $t=b,d$ ($0\leq b\leq d\leq 1$), where $b$ and $d$ express the appearance and disappearance of the hole $c$, respectively. Then, the lifetime sum $L_q$ is defined by the sum of the lifetime $d-b$ for all $q$-dimensional holes. Alternatively, it can be also expressed by the integral $L_q=\int_0^1\beta_q(t)dt$ of the Betti number. The lifetime sum has a natural connection to the classical theorem in combinatorial probability theory called Frieze's $\zeta(3)$ theorem \cite{f}, and a higher dimensional generalization of Frieze's theorem is studied in \cite{hk,hs1,hs2}. In view of this connection, we also prove the LLN and the CLT for the lifetime sum of the persistent homology in this paper. 

We remark on several papers related to this work. First, the paper \cite{hs2} studies the Bernoulli random cubical complex and determines the asymptotic order of the expected lifetime sum of the persistent homology. Hence, the LLN for the lifetime sum in this paper is stronger than the result in \cite{hs2}. We also remark that the paper \cite{ww} discusses several random cubical sets, and gives exact polynomial formulae for the expected value and variance of the intrinsic volumes. The study on cubical homology is addressed in the future work in \cite{ww}. Instead of discrete settings, the limit theorems for Betti numbers defined on point processes in $\R^d$ are studied in \cite{ysa}. The paper \cite{dhs} also shows the limit theorems for persistence diagrams on point processes in $\R^d$.

The paper is organized as follows. In Section \ref{model_result}, after brief introduction of cubical sets and cubical homology, we explain our model of random cubical sets and state the main results of the LLN and the CLT for Betti numbers and lifetime sums. Some computations of limiting Betti numbers are also presented at the end of this section. The proofs for the LLN and the CLT are given in Section \ref{sec:LLN} and \ref{sec:CLT}, respectively. At the end of Section \ref{sec:LLN}, we also show a sufficient condition for the positivity of the limiting Betti numbers. Section \ref{sec:conclusions} concludes the paper and shows some future problems.

\section{Model and main results}\label{model_result}

\subsection{Cubical homology}\label{ch}
We review in this subsection the concept of cubical homology,
which is a main object studied in this paper.
This subsection is devoted to a brief summary of Chapter 2 in \cite{kmm}.
We refer to \cite{kmm} for more detailed description and the proofs of propositions introduced in this subsection.

An {\em elementary interval} is a closed interval $I \subset \R$ of the form
\begin{equation*}
I \;=\; [l,l+1] \qquad \text{or} \qquad I \;=\; [l,l] \;,
\end{equation*}
for some $l$ in $\Z$. In the latter case, we shall write it as $I=[l]$.
Fix $d\in\N$. Throughout the paper, $d$ represents the dimension of the state space
where cubical sets, which will be defined later, are considered.
An {\em elementary cube} $Q$ is a finite product of elementary intervals of the form
\begin{equation*}
Q \;=\; I_1 \times \cdots \times I_d \;,
\end{equation*}
where $I_i \subset \R$ is an elementary interval for each $1 \le i \le d$.

Let $Q$ be an elementary cube of the form
$Q = I_1 \times \cdots \times I_d$ with elementary intervals $I_i$, $1\le i \le d$.
For each $1 \le i \le d$, denote by $I_i(Q)$ the $i$-th component of $Q$: $I_i(Q) = I_i$.
For an elementary interval $I\subset\R$, $I$ is said to be nondegenerate if $I$ is not a singleton.
We denote by $\dim Q$ the number of nondegenerate components of $Q$:
\begin{equation*}
\dim Q \;=\; \#\{1\le i \le d : \text{$I_i(Q)$ is nondegenerate} \} \;.
\end{equation*}
Define $\K^d$ by the set of all elementary cubes in $\R^d$.
For each $0 \le k \le d$, we also define $\K_k^d$ as the set of all elementary cubes 
in $\R^d$ whose dimension is equal to $k$.

A subset $X \subset \R^d$ is said to be {\em cubical} if $X$ can be written as a union of
elementary cubes in $\R^d$. Note that a cubical set is a subset of $\R^d$ with this definition,
not a set of chains (see also Remark \ref{rev1}).
Note that an infinite union of elementary cubes in $\R^d$
is also included in our definition of cubical sets although it is assumed to be a finite union in \cite{kmm}.
For a cubical set $X \subset \R^d$, denote by $\K^d(X)$ the set of all elementary cubes contained in $X$.
For each $0 \le k \le d$, we also denote by $\K^d_k(X)$ the set of all elementary cubes in $X$
whose dimension is equal to $k$.

For each elementary cube $Q$ in $\K_k^d$, let $\widehat Q$ be an algebraic object of $Q$.
$\widehat Q$ is called an {\em elementary $k$-chain}.
Denote by $\widehat\K_k^d$ the set of all elementary $k$-chains.
We define the $\Z$-free module $C_k^d = \Z(\widehat\K_k^d)$
by the module over $\Z$ generated by all elementary $k$-chains:
\begin{equation*}
C_k^d \;:=\; \{ c = \sum_{\text{finite sum}} \a_i \widehat Q_i : \widehat Q _i \in \widehat\K_k^d,\  \a_i \in \Z \} \;.
\end{equation*}
An element belonging to  $C_k^d$ is called a {\em $k$-dimensional chain}.
We also set $C_k^d:=0$ for $k<0$ or $k>d$.
For a cubical set $X \subset \R^d$, we similarly denote by $\widehat\K_k^d(X)$
the set of all elementary $k$-chains in $X$
and define the $\Z$-free module $C_k^d(X) = \Z(\widehat\K_k^d(X))$, respectively:
\begin{align*}
\widehat\K_k^d(X) & \;:=\; \{ \widehat Q : Q \in \K_k^d(X) \} \;, \\
C_k^d(X) & \;:=\; \{ c = \sum_{\text{finite sum}} \a_i \widehat Q_i : \widehat Q _i \in \widehat\K_k^d(X),\  \a_i \in \Z \} \;.
\end{align*}

Consider $k$-dimensional chains $c_1, c_2 \in C_k^d$ with $c_1 = \sum_{i=1}^m \a_i \widehat Q_i$
and $c_2 = \sum_{i=1}^m \b_i \widehat Q_i$.
The scalar product of the chains $c_1$ and $c_2$ is defined as
\begin{equation*}
\lan c_1, c_2\ran \;=\; \sum_{i=1}^m \a_i \b_i \;.
\end{equation*}
For elementary cubes $P\in \K_k^d$ and $Q\in\K_{k'}^{d'}$, 
we define the cubical product $\widehat P \diamond \widehat Q$ by
\begin{equation*}
\widehat P \diamond \widehat Q \;:=\; \widehat{P \times Q} \;.
\end{equation*}
Note that, for elementary cubes $P \in \K_k^d$ and $Q \in \K_{k'}^{d'}$, 
the direct product $P \times Q$ is also an elementary cube belonging to $\K_{k+k'}^{d+d'} $.
Therefore the cubical product $\widehat P \diamond \widehat Q$ is well-defined
and defines an element in $\widehat\K_{k+k'}^{d+d'}$.
For general chains $c_1 \in C_k^d$ and $c_2 \in C_{k'}^{d'}$, we define
the cubical product $c_1\diamond c_2\in C_{k+k'}^{d+d'}$ as
\begin{equation*}
c_1\diamond c_2 \;=\; \sum_{P \in \K_k^d}\sum_{Q \in \K_{k'}^{d'}}
\lan c_1,\widehat P \ran \lan c_2, \widehat Q \ran \widehat{P \times Q} \;.
\end{equation*}

Let $k\in\Z$. The {\em cubical boundary operator} $\partial_k:C_k^d\to C_{k-1}^d$,
which is a homomorphism of $\Z$-modules, is defined by the following way.
We first set $\partial_k:=0$ if $C_k^d=0$ or $C_{k-1}^d=0$.
Let $Q$ be an elementary cube in $\K_k^d$.
For $k=1$, the boundary operator is defined as
\begin{align*}
\partial_1\widehat{Q} \;=\;
\begin{cases}
\widehat{[l+1]} - \widehat{[l]} \;, \qquad & \text{if $Q=[l,l+1]$ for some $l\in\Z$ ,} \\
0 \;, & \text{otherwise} \;.
\end{cases}
\end{align*}
We assume that the boundary operator $\partial_k$
for all $k$-dimensional elementary chains is already defined for some $k\ge1$ and assume that $\dim Q= k+1$.
Let $I=I_1(Q)$ and $P=I_2(Q)\times\cdots\times I_d(Q)$.
We then define $\partial_{k+1}\widehat Q$ by
\begin{equation*}
\partial_{k+1}\widehat Q \;:=\; (\partial_{k_1} \widehat I) \diamond \widehat P
+ (-1)^{\dim I} \widehat I \diamond (\partial_{k_2} \widehat P) \;,
\end{equation*}
where $k_1= \dim I$ and $k_2= \dim P$.
We finally extend the definition to all chains by linearity, that is,
for a chain $c$ in $C_k^d$ with the form $c = \a_1\widehat{Q}_1 + \cdots + \a_m\widehat{Q}_m$, define
\begin{equation*}
\partial_k c \;:=\; \a_1 \partial_k \widehat{Q}_1 + \cdots + \a_m \partial_k \widehat{Q}_m \;.
\end{equation*}
 
We introduce in the following proposition an alternative formula for the boundary operator $\partial_k$.

\begin{proposition}
Let $Q\subset \R^d$ be a $k$-dimensional elementary cube
$Q = I_1 \times \cdots \times I_d$ and let $I_{i_1}, \cdots, I_{i_k}$ be the nondegenerate components of $Q$
with $I_{i_j}=[l_j, l_j+1]$ for some $l_j\in\Z$.
For each $1 \le j \le k$, let
\begin{align*}
Q_j^- & \;:=\; I_1 \times\dots\times I_{i_j-1} \times [l_j] \times I_{i_j+1} \times\cdots\times I_d \;, \\
Q_j^+ & \;:=\; I_1 \times\dots\times I_{i_j-1} \times [l_j+1] \times I_{i_j+1} \times\cdots\times I_d \;.
\end{align*}
Then
\begin{equation*}
\partial_k \widehat Q \;=\; \sum_{j=1}^k (-1)^{j-1} \Big ( \widehat Q_j^+ - \widehat Q_j^- \Big ) \;.
\end{equation*}
\end{proposition}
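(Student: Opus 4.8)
The plan is to prove the formula by induction on the ambient dimension $d$, mirroring the recursive definition of $\partial_k$, which peels off the first coordinate of $Q$. Throughout I write $I = I_1(Q)$ and $P = I_2(Q)\times\cdots\times I_d(Q)$, regarding $P$ as an elementary cube in $\R^{d-1}$, so that the inductive hypothesis supplies the claimed formula for $\partial \widehat P$. The base case $d = 1$ is checked directly: if $Q = [l,l+1]$ then $k = 1$, the unique nondegenerate component gives $Q_1^+ = [l+1]$ and $Q_1^- = [l]$, and the asserted sum reduces to $\widehat{[l+1]} - \widehat{[l]}$, exactly the definition of $\partial_1$; if $Q = [l]$ then $k = 0$, the sum is empty, and $\partial_0 = 0$ by the convention $C_{-1}^d = 0$.

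For the inductive step I would split into two cases according to whether $I$ is nondegenerate. If $I = [l_1, l_1+1]$ is nondegenerate, then $\dim I = 1$, $\partial_1 \widehat I = \widehat{[l_1+1]} - \widehat{[l_1]}$, and the defining relation becomes $\partial_k \widehat Q = (\partial_1 \widehat I)\diamond \widehat P - \widehat I \diamond(\partial_{k-1}\widehat P)$, since $(-1)^{\dim I} = -1$ and $\dim P = k-1$. Using bilinearity of the cubical product together with $\widehat A \diamond \widehat B = \widehat{A\times B}$, the first summand is $\widehat{[l_1+1]\times P} - \widehat{[l_1]\times P} = \widehat{Q_1^+} - \widehat{Q_1^-}$, which is the $j = 1$ term. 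For the second summand I would expand $\partial_{k-1}\widehat P$ by the inductive hypothesis; the nondegenerate components of $P$ are $I_{i_2},\ldots,I_{i_k}$, relabeled $1,\ldots,k-1$ inside $P$. After taking the product with $\widehat I$ and restoring the original labels via $j = j'+1$, the second summand contributes $-\sum_{j=2}^{k}(-1)^{j-2}(\widehat{Q_j^+} - \widehat{Q_j^-})$, and the leading minus sign turns $(-1)^{j-2}$ into $(-1)^{j-1}$, so the two summands combine into $\sum_{j=1}^k (-1)^{j-1}(\widehat{Q_j^+} - \widehat{Q_j^-})$.

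If instead $I = [l]$ is degenerate, then $\dim I = 0$, $\partial_0 \widehat I = 0$, and the defining relation collapses to $\partial_k \widehat Q = \widehat I \diamond(\partial_k \widehat P)$, where now $\dim P = k$ so that $P$ carries all $k$ nondegenerate components in the same relative order as $Q$. Taking the product with the degenerate factor $\widehat{[l]}$ identifies $[l]\times (P)_j^\pm$ with $Q_j^\pm$, so the inductive hypothesis for $\partial_k \widehat P$ transfers verbatim and again yields $\sum_{j=1}^k (-1)^{j-1}(\widehat{Q_j^+} - \widehat{Q_j^-})$.

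The only delicate point is the bookkeeping in the nondegenerate case: one must check that removing the first coordinate shifts the enumeration of nondegenerate components by one, and that the resulting shift in the exponent of $(-1)$ exactly compensates the factor $(-1)^{\dim I} = -1$ coming from the definition. Verifying this sign cancellation, together with the fact that degenerate factors pass through the cubical product unchanged, is the heart of the argument; the remaining manipulations are routine applications of bilinearity of $\diamond$.
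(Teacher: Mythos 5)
Your proof is correct. Note that the paper itself gives no proof of this proposition: it is quoted from the reference \cite{kmm} (Kaczynski--Mischaikow--Mrozek), with the explicit remark that proofs of the propositions in that subsection are deferred there. Your induction on the ambient dimension $d$, splitting $Q = I \times P$ and treating the degenerate and nondegenerate cases of $I$ separately, is exactly the standard argument that unwinds the recursive definition of $\partial_k$ (and is the argument in the cited reference); the sign bookkeeping $(-1)^{\dim I}(-1)^{j'-1} = (-1)^{j-1}$ under the relabeling $j = j'+1$, which you identify as the heart of the matter, is handled correctly.
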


We also sum up basic properties of the boundary operator $\partial_k$,
which enable us to define homology groups for cubical sets.

\begin{proposition}\label{1}
The boundary operator satisfies the following properties:
\begin{enumerate}
\item For any $k\in\Z$, it holds that $\partial_{k-1} \circ \partial_k = 0$.
\item For any cubical set $X$ in $\R^d$ and any $k\in\Z$, it holds that
$\partial_k(C_k^d(X)) \subset C_{k-1}^d(X)$. In particular, the operator
$\partial_k^X$ defined as the restriction of $\partial_k$ onto $C_k^d(X)$
is a map from $C_k^d(X)$ to $C_{k-1}^d(X)$.
\end{enumerate} 
\end{proposition}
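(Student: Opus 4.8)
The plan is to verify both assertions on the elementary generators $\widehat{Q}$ and then extend by (bi)linearity, since $\partial$ and $\diamond$ are defined that way. I would dispose of the easier statement (2) first. Let $Q\in\K_k^d(X)$; by the explicit formula of the preceding proposition, $\partial_k\widehat{Q}$ is an integer combination of the elementary chains $\widehat{Q_j^{+}}$ and $\widehat{Q_j^{-}}$, $1\le j\le k$. Each face $Q_j^{\pm}$, regarded as a subset of $\R^d$, is contained in $Q$, so $Q\subset X$ forces $Q_j^{\pm}\subset X$; since $Q_j^{\pm}$ is itself an elementary cube of dimension $k-1$, this gives $Q_j^{\pm}\in\K_{k-1}^d(X)$. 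Hence $\partial_k\widehat{Q}\in C_{k-1}^d(X)$, and linearity over the generators of $C_k^d(X)$ yields $\partial_k(C_k^d(X))\subset C_{k-1}^d(X)$, so that $\partial_k^X$ is well defined.

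For statement (1) the heart of the matter is a Leibniz-type product rule
\begin{equation*}
\partial(c_1\diamond c_2) \;=\; (\partial c_1)\diamond c_2 + (-1)^{\dim c_1}\,c_1\diamond(\partial c_2)
\end{equation*}
for homogeneous chains $c_1,c_2$, which by bilinearity I only need to establish for elementary chains $c_1=\widehat{P}$, $c_2=\widehat{Q}$. This is where I expect the main obstacle to lie. The recursive definition of $\partial$ is exactly this identity when $P$ is a single elementary interval; the general case has to be bootstrapped by induction on the number of interval factors of $P$. Writing $P=I_1\times P'$ and using the associativity of $\diamond$ (inherited from that of $\times$), I would apply the single-interval case to the outer factor $I_1$ and then the inductive hypothesis to $\widehat{P'}\diamond\widehat{Q}$; the delicate part is keeping track of the signs $(-1)^{\dim I_1}$ and $(-1)^{\dim P'}$ and of the degenerate factors, for which $\partial=0$, after which the two boundary contributions recombine into $(\partial\widehat{P})\diamond\widehat{Q}+(-1)^{\dim P}\widehat{P}\diamond(\partial\widehat{Q})$.

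Granting the product rule, I would prove $\partial_{k-1}\circ\partial_k=0$ by induction on $d$ (equivalently, on the number of factors), the cases with $C_k^d=0$ or $C_{k-1}^d=0$ being covered by the convention $\partial=0$. The base case $d=1$ is immediate from $\partial_1\widehat{[l,l+1]}=\widehat{[l+1]}-\widehat{[l]}$ together with $\partial_0=0$. For the inductive step I write $Q=I\times P$ with $I=I_1(Q)$, put $a=\widehat{I}$, $b=\widehat{P}$ and $k_1=\dim I$, and apply the product rule twice to get
\begin{equation*}
\partial^2(a\diamond b) \;=\; (\partial^2 a)\diamond b + \big((-1)^{k_1-1}+(-1)^{k_1}\big)(\partial a)\diamond(\partial b) + a\diamond(\partial^2 b).
\end{equation*}
The middle coefficient vanishes, $\partial^2 a=0$ holds directly because $\dim I\le 1$, and $\partial^2 b=0$ by the inductive hypothesis applied to $P\subset\R^{d-1}$. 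Hence $\partial^2\widehat{Q}=0$, and extending by linearity gives $\partial_{k-1}\circ\partial_k=0$ on all of $C_k^d$, completing the proof.
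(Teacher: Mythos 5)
Your proof is correct: part (2) follows from the face formula of the preceding proposition together with the observation $Q_j^{\pm}\subset Q\subset X$ plus linearity, and part (1) follows from the Leibniz rule $\partial(c_1\diamond c_2)=(\partial c_1)\diamond c_2+(-1)^{\dim c_1}c_1\diamond(\partial c_2)$ (proved by induction on the interval factors, using that the recursive definition of $\partial$ is exactly the single-factor case) and the sign cancellation $(-1)^{k_1-1}+(-1)^{k_1}=0$ in the double-boundary computation. The paper itself gives no proof of this proposition---it explicitly defers all proofs in this subsection to Chapter 2 of \cite{kmm}---and your argument is essentially the standard one found there, so the two approaches coincide.
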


For a cubical set $X$ in $\R^d$, the cubical chain complex for $X$
is defined as the sequence:
\begin{align*}
\xymatrix{
\cdots\ar[r] & C^d_{k+1}(X)\ar[r]^{\partial^X_{k+1}} & C^d_k(X)\ar[r]^{\partial^X_k~} & C^d_{k-1}(X)\ar[r] & \cdots
}
\end{align*}
We then define the {\em $k$-th homology} $H_k(X) := Z_k(X)/ B_k(X)$
by the quotient $\Z$-module of $Z_k(X):=\ker \partial^X_k$ and $B_k(X) := \mathrm{im}\,\partial^X_{k+1}$.
Note that from Proposition \ref{1} $B_k(X)$ is a submodule of $Z_k(X)$
and thereby $H_k(X)$ is well-defined.
Note also that, if $X$ is bounded, then the homology group $H_k(X)$ is a finitely generated $\Z$-module.
Therefore from the structure theorem for finitely generated $\Z$-modules (see \cite[Corollary 3.1]{kmm}),
the homology group $H_k(X)$ can be represented as $H_k(X) \simeq T_k(X) \oplus \Z^{\b_k(X)}$,
where $T_k(X)$ and $\b_k(X)$ are called the {\em $k$-th torsion} and {\em $k$-th Betti number}, respectively.
Note that, for any cubical set $X$ in $\R^d$ and $k<0$ or $k>d$,
it follows from $C_{k}^d(X)=0$ that $\b_k(X)=0$. Furthermore, $\beta_d(X)$ is also zero for any cubical set
$X\subset\R^d$.  Therefore our attention will be always focused on the case $0\le k < d$.

So far, we have defined the homology $H_k(X)$ for a cubical set $X$ which expresses the $k$-dimensional topological features in $X$. Next, we introduce a generalization of homology defined for an increasing family of cubical sets which characterizes {\em persistent} topological features.

For a bounded cubical set $X$, let $\X=\{X(t)\}_{0\leq t\leq 1}$ be an increasing family of cubical sets $X(t)\subset X$, i.e., $X(s)\subset X(t)$ for any $s\leq t$. It follows from the finiteness that the parameters $t$ at which the cubical sets properly increase $X(t-\epsilon)\subsetneq X(t)$ for any sufficiently small $\epsilon>0$ are finite, and hence we can assign the index $0\leq t_1<\dots <t_n\leq 1$ for those parameters. We recall that the inclusion map $X(s)\hookrightarrow X(t)$ for any $s\leq t$ induces the linear map $\iota^t_s:H_k(X(s))\rightarrow H_k(X(t))$ on homologies by the natural assignment $[z]\mapsto[z]$, where $[z]$ is taken for each equivalent class. Then,
the {\em $k$-th persistent homology} $H_k(\X)=(H_k(X(t)),\iota^t_s)$ of $\X$ is defined by the family of homologies $\{H_k(X(t))\colon 0\leq t\leq 1\}$ and the induced linear maps $\iota^t_s$ for all $s\leq t$ \cite{elz,zc}.

When we replace the coefficient $\Z$ of modules with a field $\Bbbk$, 
the persistent homology satisfies an important structure theorem \cite{zc}. Namely, the persistent homology can be uniquely decomposed as a direct sum
\begin{align*}
H_k(\X)\;\simeq\;\bigoplus_{i=1}^mI(b_i,d_i)\;.
\end{align*}
Here, each summand $I(b_i,d_i)=(U_t,f^s_t)$ called interval representation consists of a family of vector spaces
\begin{align*}
U_t\;=\; \left\{\begin{array}{ll}
\Bbbk,& b_i\leq t<d_i \;,\\
0,&{\rm otherwise}\;,
\end{array}\right.
\end{align*}
and the identity map $f^s_t={\rm id}_\Bbbk$ for $b_i\leq t\leq s<d_i$. Intuitively, each interval $I(b_i,d_i)$ represents a persistent topological feature (i.e., $k$-dimensional hole) which appears and disappears at $t=b_i$ (birth) and $t=d_i$ (death) in $\X$, respectively. Note that the birth $b_i$ and death $d_i$ are given by the indices in $\{t_1,\dots,t_n\}$. The lifetime of the $i$-th interval is defined by $\ell_i=d_i-b_i$, and the lifetime sum of the persistent homology $H_k(\X)$ is given by $L_k(\X)=\sum_{i=1}^m \ell_i$. It is also known \cite{hs1} that the lifetime sum has an alternative expression 
\begin{align*}
L_k(\X)\;=\;\int_0^1\beta_k(X(t))dt\;.
\end{align*}
In this paper, we use the latter expression of the lifetime sum. As we remarked in Section \ref{sec:intro}, the persistent homology and its lifetime sum provide several interesting problems in random topology which 
can be regarded as higher dimensional generalizations of classical subjects in probability theory. We refer to the papers \cite{hs1,hs2} for these problems.

\subsection{Random cubical set}\label{rcs}
We introduce in this subsection our model which describes a wide class of random cubical sets.
We continue to use notation introduced in Subsection \ref{ch}.

Let $\Om$ be the product space $[0,1]^{\K^d}$, called a {\em configuration space},
equipped with the product topology. We denote its Borel $\si$-field by $\F$.
A general element of $\Om$ is denoted by $\om = \{\om_Q\}_{Q \in \K^d}$ and is called a {\em configuration}.
Let $P$ be a stationary and ergodic probability measure on the configuration space $(\Om, \F)$,
that is, $P$ satisfies the following conditions:
\begin{itemize}
\item {\em Probability}: $P$ is a probability measure on $(\Om, \F)$.
\item {\em Stationarity}: $P(\tau_x^{-1}A)=P(A)$ holds for any $x\in\Z^d$ and any $A\in\F$.
\item {\em Ergodicity}: If $\tau_x^{-1}A = A$ for any $x\in\Z^d$ and for some $A\in\F$, $P(A)=0$ or $1$.
\end{itemize}
In the above conditions, $\{\tau_x\}_{x\in\Z^d}$ represents the translation group acting on $\Om$:
\begin{align*}
x + Q & \;:=\; \{x+y : y\in Q\} \;,   & & x \in \Z^d \;, Q \in \K^d \;, \\
\tau_x\om & \;:=\; \{\om_{-x+Q}\}_{Q\in\K^d} \;,   & & \om\in\Om \;, \\
\tau_xA &\;:=\; \{\tau_x\om : \om\in A\} \;, & & A \in \mathcal F \;.
\end{align*}
The expectation with respect to $P$ is denoted by $E[\cdot]$.

We now associate a configuration $\om\in\Om$ with a cubical set  in $\R^d$ as follows.
For an elementary cube $Q\in\K^d$ and time $t\in[0,1]$,
$Q$ is added to a set $X(t) = X_\om(t)$ if $\om_Q \le t$, otherwise, $Q$ is not added.
More precisely, a random cubical set $X(t)$, depending on a configuration $\om$ and $t\in[0,1]$, is defined by 
\begin{equation}\label{rev2}
X(t)  \;=\;  \bigcup_{Q\in\K^d:  \om_Q\le t}Q \;.
\end{equation}
Note that this definition allows some elementary cube $Q\in\mathcal{K}^d$
with $\omega_Q > t$ to be included in the cubical set $X(t)$.
For each $n\in\N$, let $\La_n$ be the rectangle in $\R^d$ given by $\La_n=[-n,n]^d$.
We also set $X^n(t) = X(t) \cap \La_n$.

\begin{remark}\label{rev1}
{\rm The definition \eqref{rev2} perhaps seems to be strange since the association from
a configuration to a cubical set is not one-to-one.
Instead one may consider  the $\Z$-modules spanned by
$\{\widehat Q\in \widehat{\mathcal{K}}^d_k: \omega_Q\le t\}$
for a configuration $\omega\in[0,1]^{\mathcal{K}^d}$.
However such modules are not closed under the boundary operators in general.
One way to avoid this problem is introducing the set of all configurations
$\omega\in[0,1]^{\mathcal{K}^d}$ such that $\om_P\le\om_Q$ for any elementary cubes
$P,Q$ with $P\subset Q$.}
\end{remark}

\begin{remark}{\rm 
To discuss the homology or the Betti number only for fixed $t$,
it might be more suitable to discuss the configuration space $\{0,1\}^{\K^d}$
and the cubical set
\begin{equation*}
X \;=\;  \bigcup_{Q\in\K^d:  \om_Q=1} Q \;,
\end{equation*}
associated with the configuration $\om\in\{0,1\}^{\K^d}$.
One reason to work with $[0,1]^{\K^d}$ is that it provides us with an increasing family $\X=\{X(t)\}_{0\leq t\leq 1}$ of cubical sets, and hence naturally connects to the persistent homology $H_k(\X)$.
}\end{remark}

\subsection{Examples}\label{example}
We introduce in this subsection several examples explained in Subsection \ref{rcs}.
In all examples, we can easily check that the probability measure considered in each example
is a stationary and ergodic probability measure on the configuration space
(see \cite{mr} for the ergodicity).
We refer to the paper \cite{ww} for the study of intrinsic volumes of several models
including parts of Examples \ref{bernoulli} and \ref{closedface}.

\begin{example}\label{bernoulli}
{\rm
Fix integers $d\in\N$ and $0\le k\le d$. 
One simple example of stationary and ergodic measures on $\Om$
is given as the product measure with marginal distributions
\begin{align*}
\begin{cases}
P(\om_Q=0)\;=\; 1 \;, \quad & \text{if }Q\in\K^d_l, l<k \;, \\
P(\om_Q\le t) \;=\; t \;, \quad & \text{if }Q\in\K^d_k, t\in[0,1] \;,  \\
P(\om_Q=1)\;=\; 1 \;, \quad & \text{if }Q\in\K^d_l,  k<l\;.
\end{cases}
\end{align*}
This model is called the {\em Bernoulli random cubical sets} \cite{hs2}.
In this model, for each configuration $\om\in\Om$ and each elementary cube $Q\in\K_k^d$,
$\om_Q$ can be regarded as a birth time of $Q$ in a random cubical set.
We also remark that, for $k=1$ and $0<t<1$, the resultant random cubical set $X(t)$ is known as a
{\em bond percolation model} with parameter $t$.
We refer to the standard text \cite{g} for the mathematical theory of percolation models.
See Figures \ref{fig:bernoulli_2} and \ref{fig:bernoulli_3} for this model with each parameter.
}\end{example}

\begin{figure}[htbp]
\begin{center}
	\includegraphics[width=40mm]{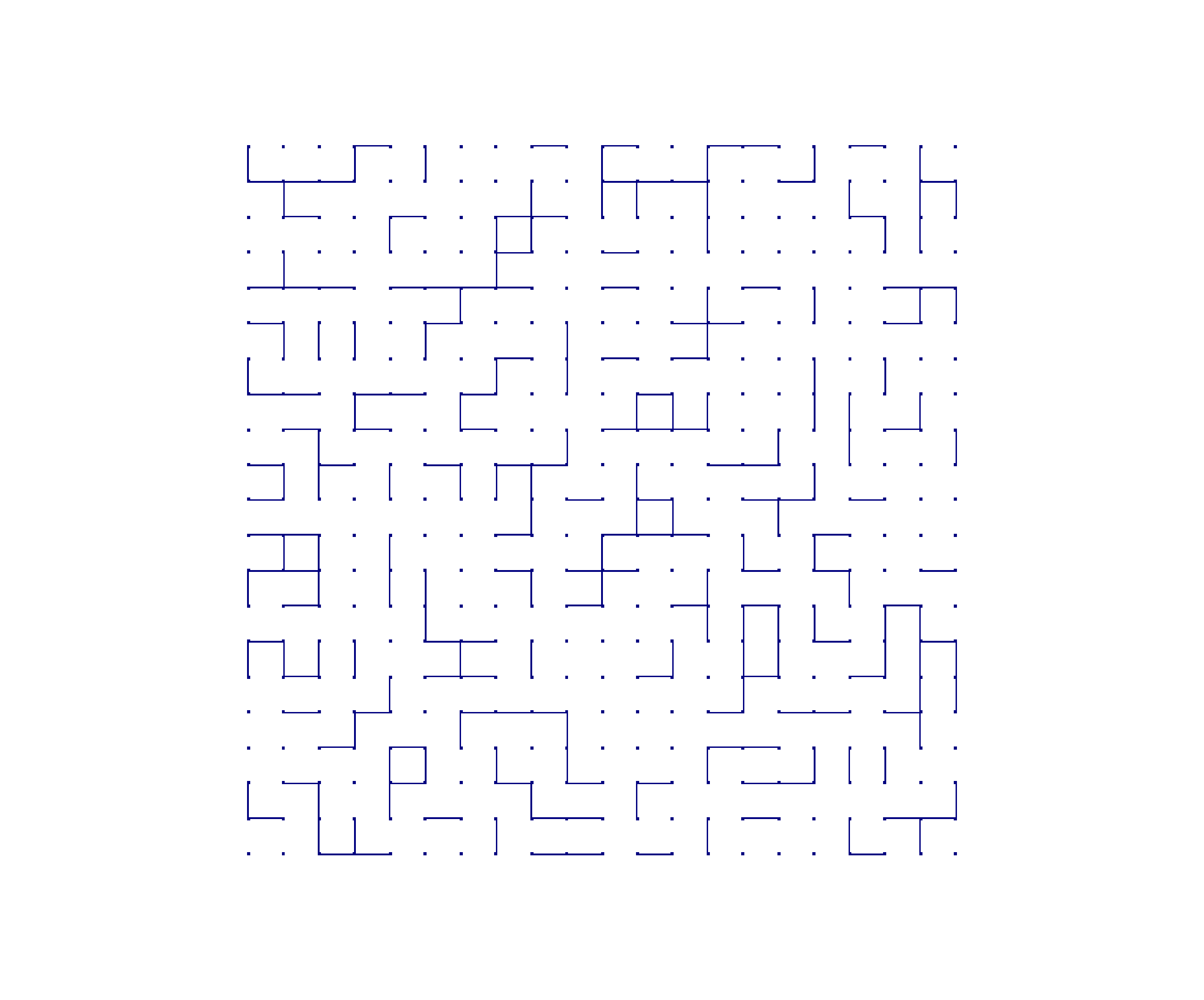}
	\includegraphics[width=40mm]{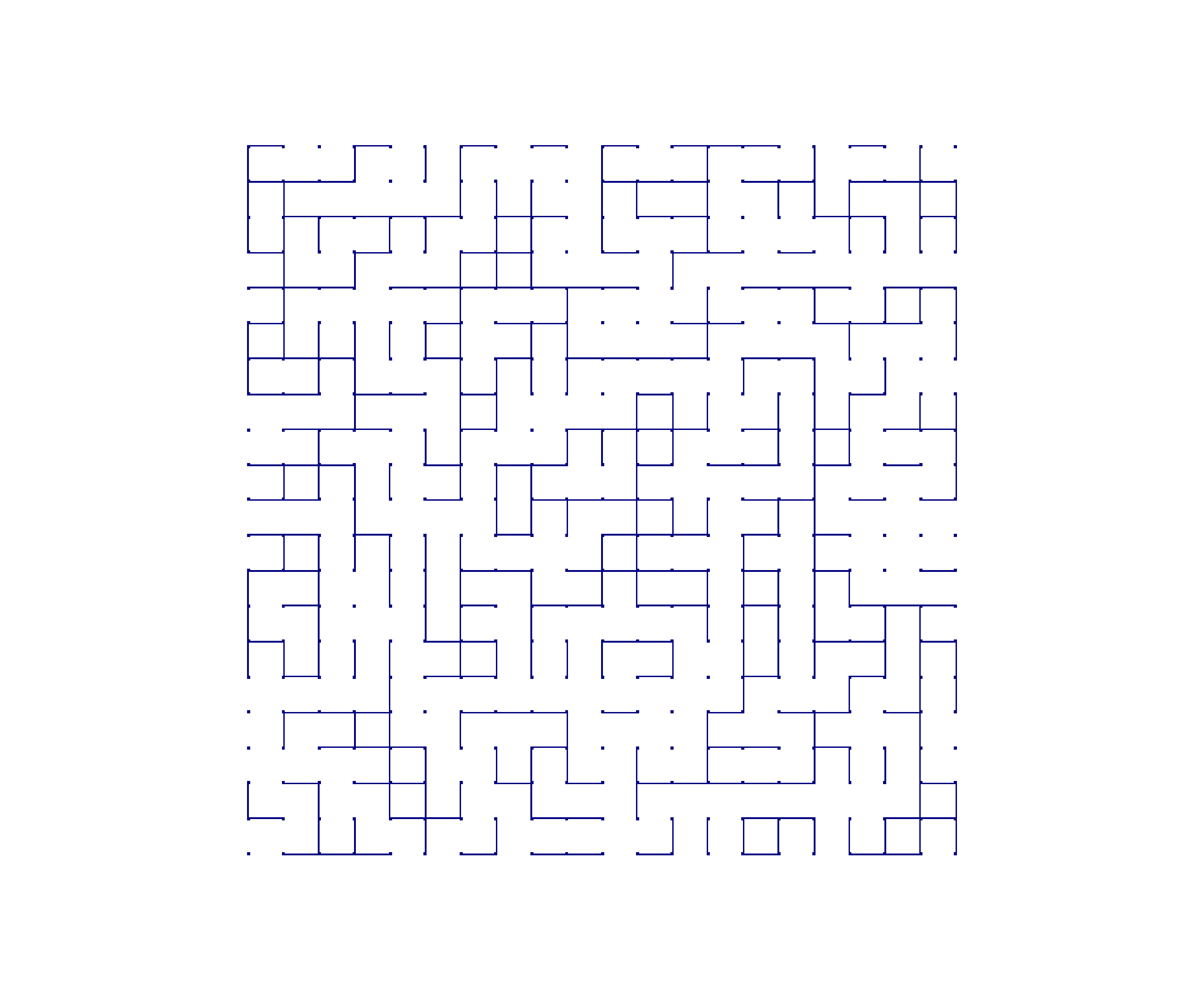}
	\includegraphics[width=40mm]{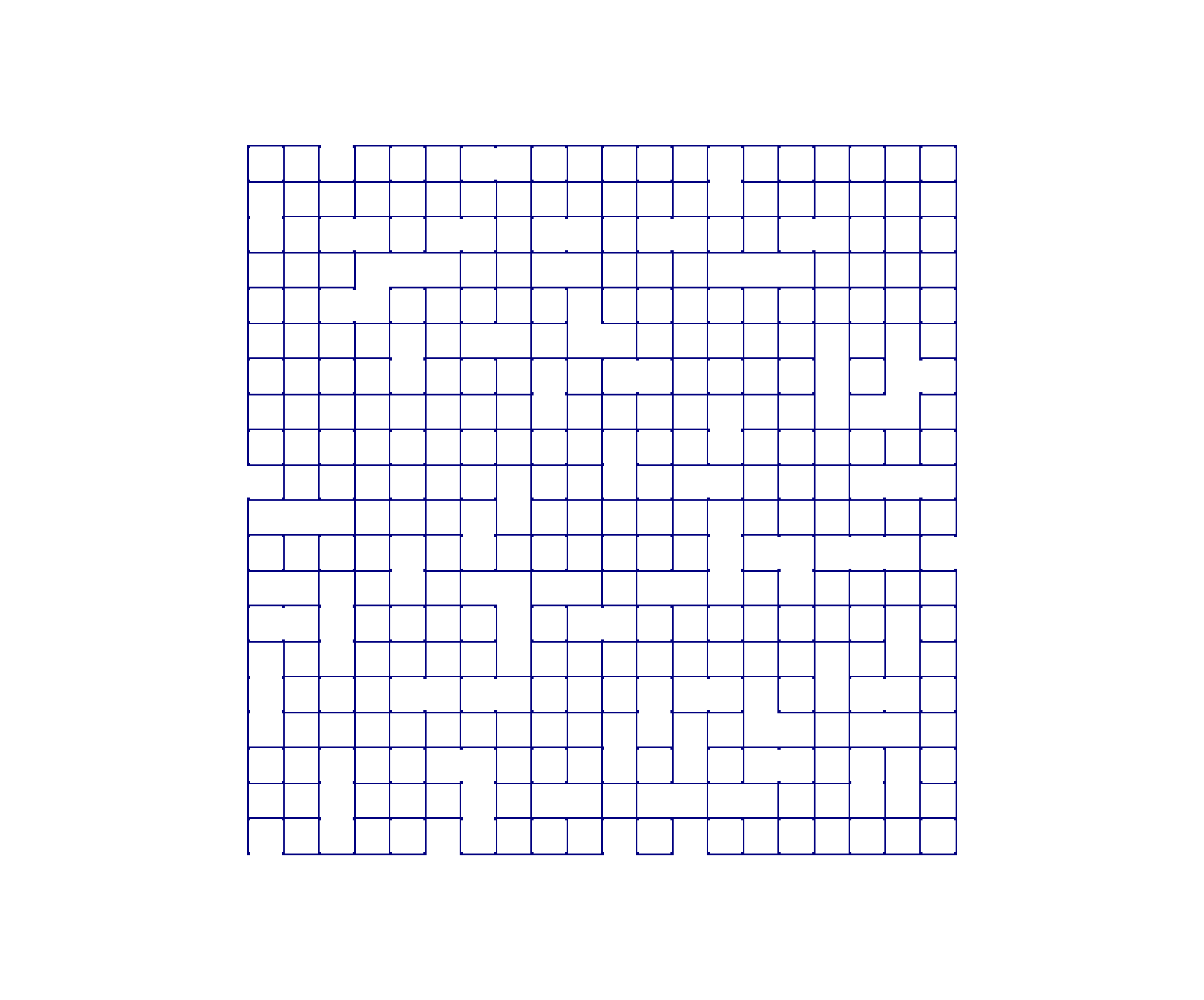}
\end{center}
\caption{$d=2$, $k=1$, $t=0.3$ (left), $0.5$ (middle), $0.9$ (right).}
\label{fig:bernoulli_2}
\end{figure}
\begin{figure}[htbp]
\begin{center}
	\includegraphics[width=40mm]{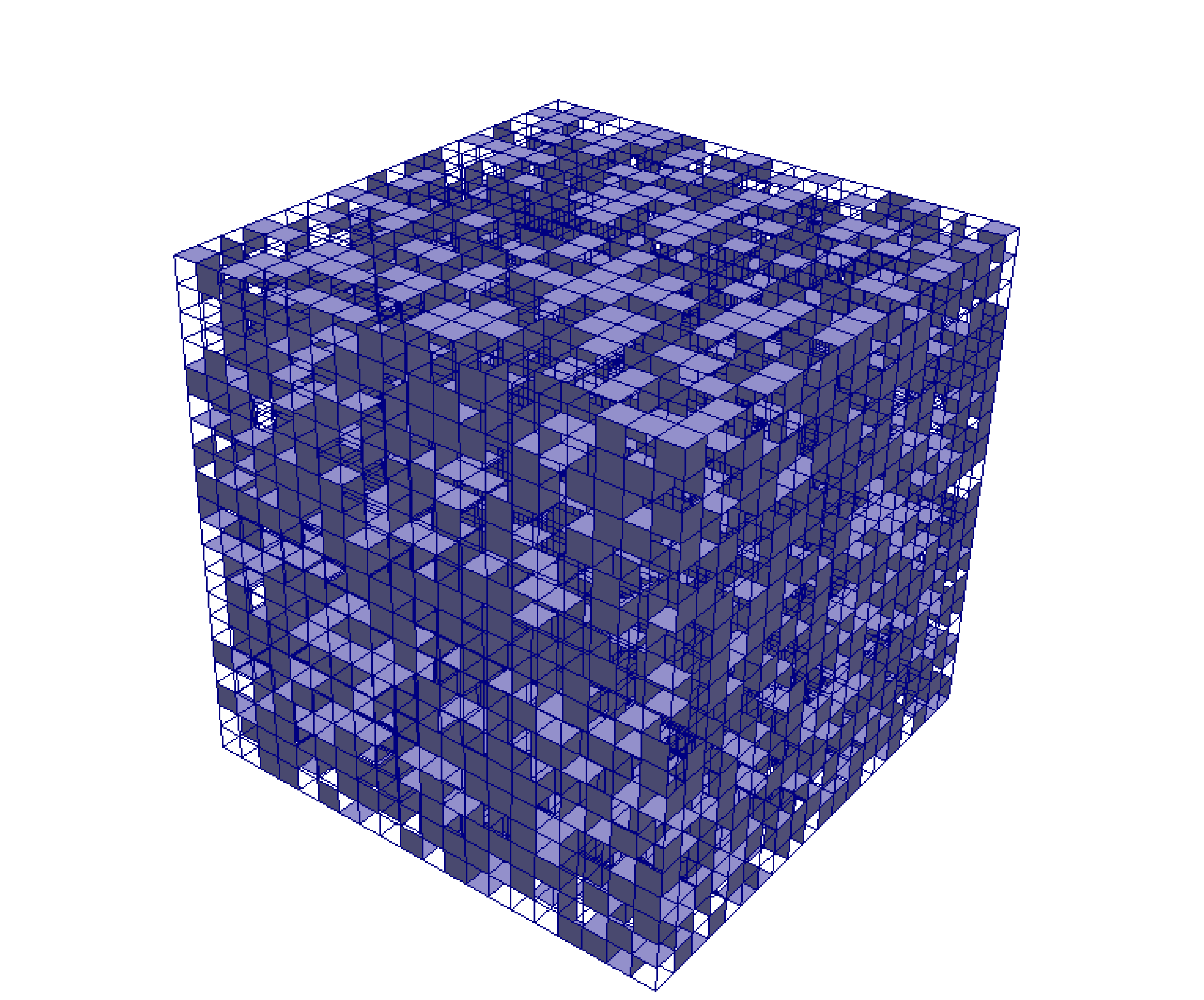}
	\includegraphics[width=40mm]{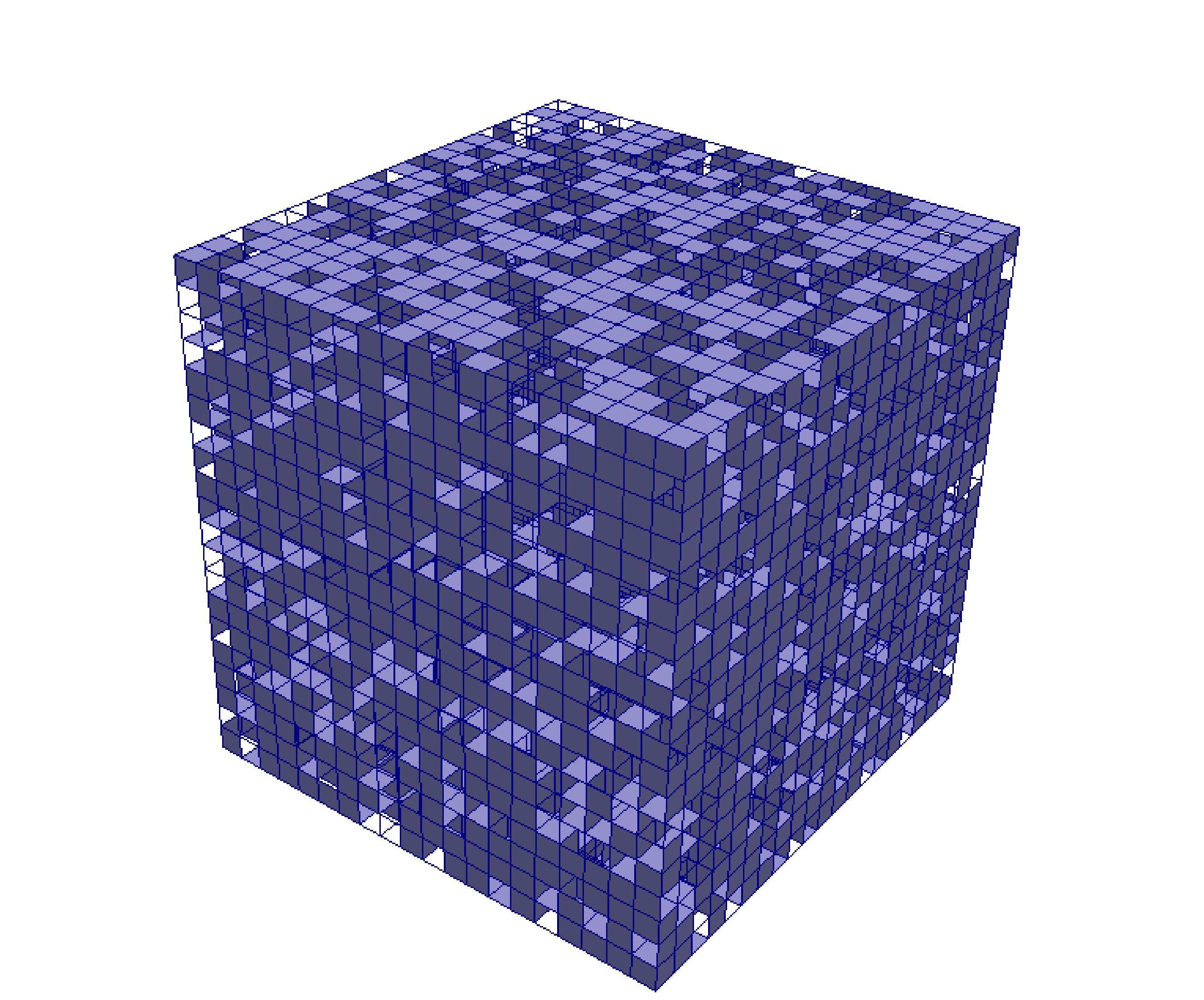}
	\includegraphics[width=40mm]{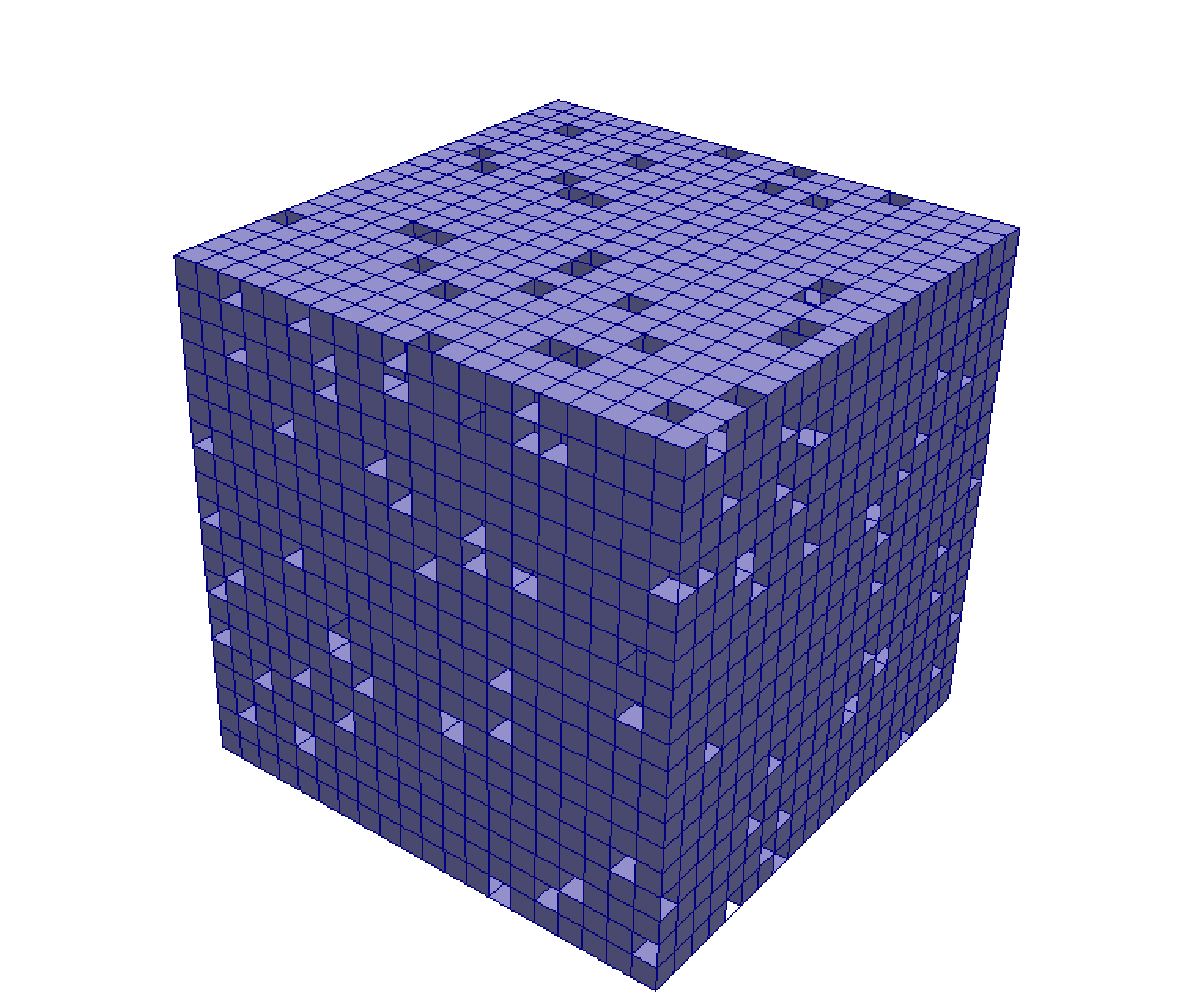}
\end{center}
\caption{$d=3$, $k=2$, $t=0.3$ (left), $0.5$ (middle), $0.9$ (right).}
\label{fig:bernoulli_3}
\end{figure}

\begin{example}\label{closedface}
{\rm 
In the previous example, the resultant cubical set is a union of the $(k-1)$-dimensional complete skeleton and $k$-dimensional elementary cubes.
Another choice of probability measures allows for randomness to occur in all dimensions.
For instance, the product measure $P$ on $\Om$ with marginal distributions
$P(\om_Q\le t)=t$, for any $Q\in\K^d$ and any $t\in[0,1]$, gives such an example.
Note that the probability of the event $\{Q\subset X(t)\}$ is not equal to $t$
if the dimension of $Q$ is less than $d$ (for instance, if $d=1$, then
$P(\{0\}\subset X(t))=1-(1-t)^3\neq t$ in general).
See Figures \ref{fig:tsunoda_2} and \ref{fig:tsunoda_3} for this model with each parameter.
}
\end{example}

\begin{figure}[htbp]
\begin{center}
	\includegraphics[width=40mm]{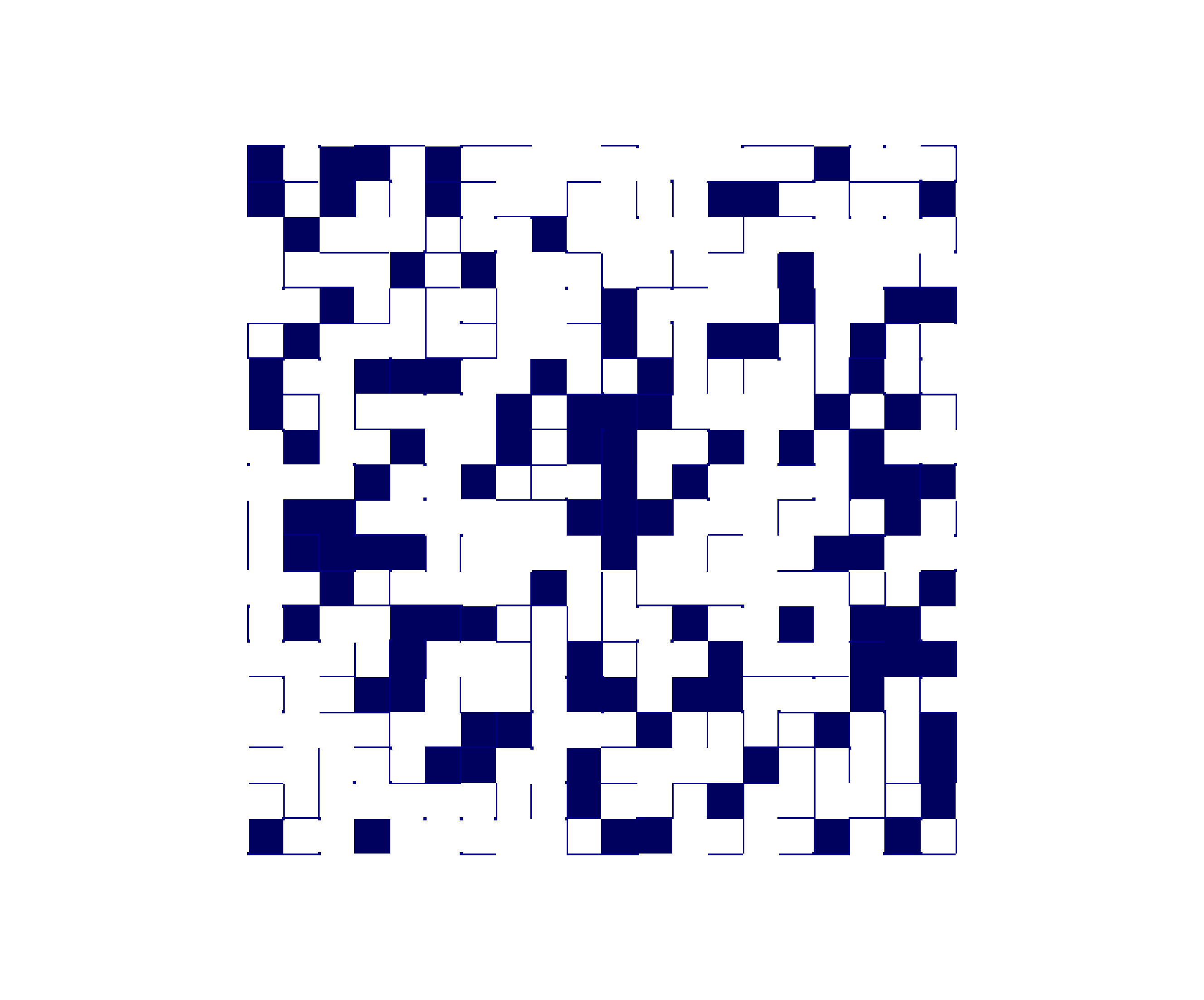}
	\includegraphics[width=40mm]{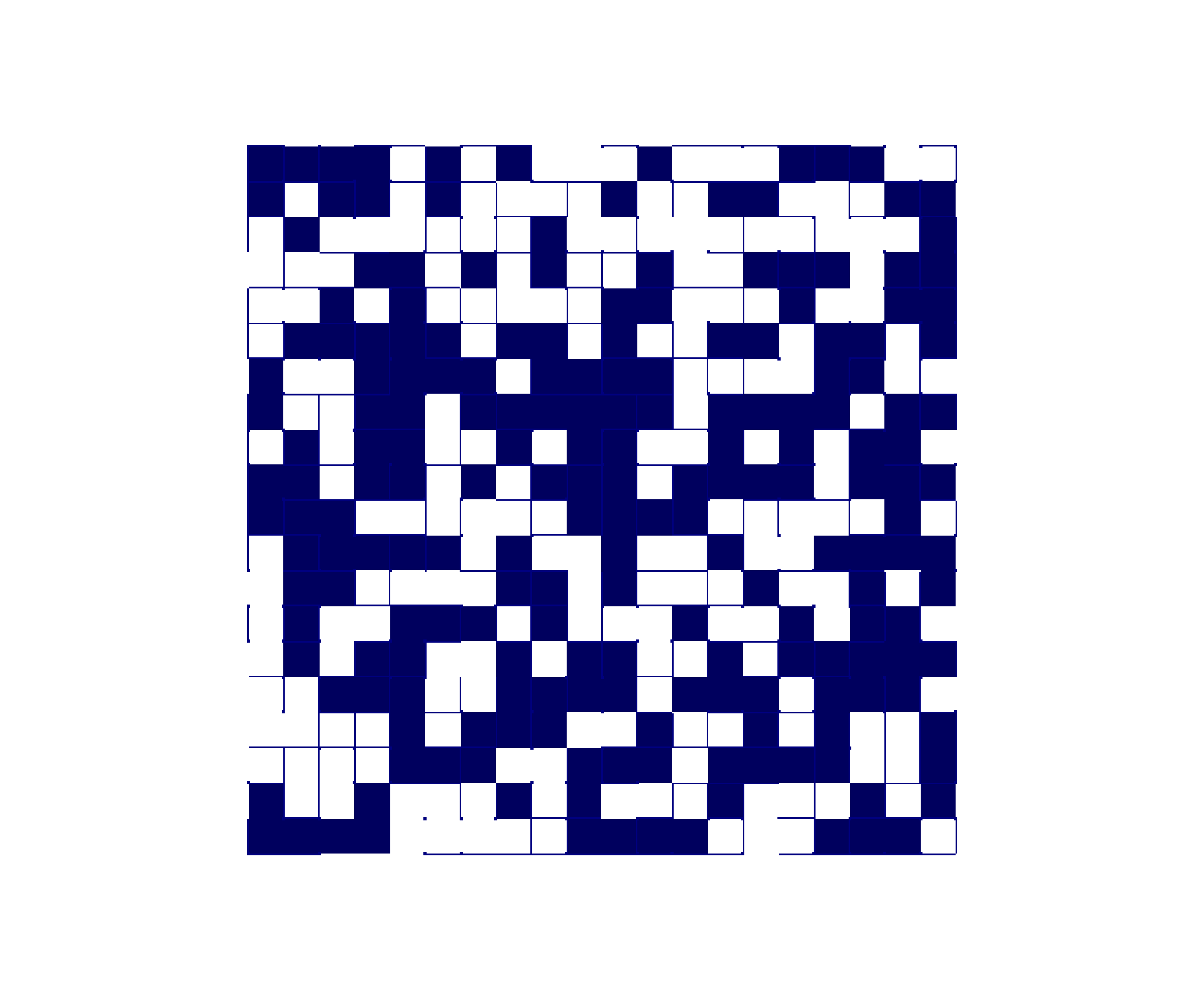}
	\includegraphics[width=40mm]{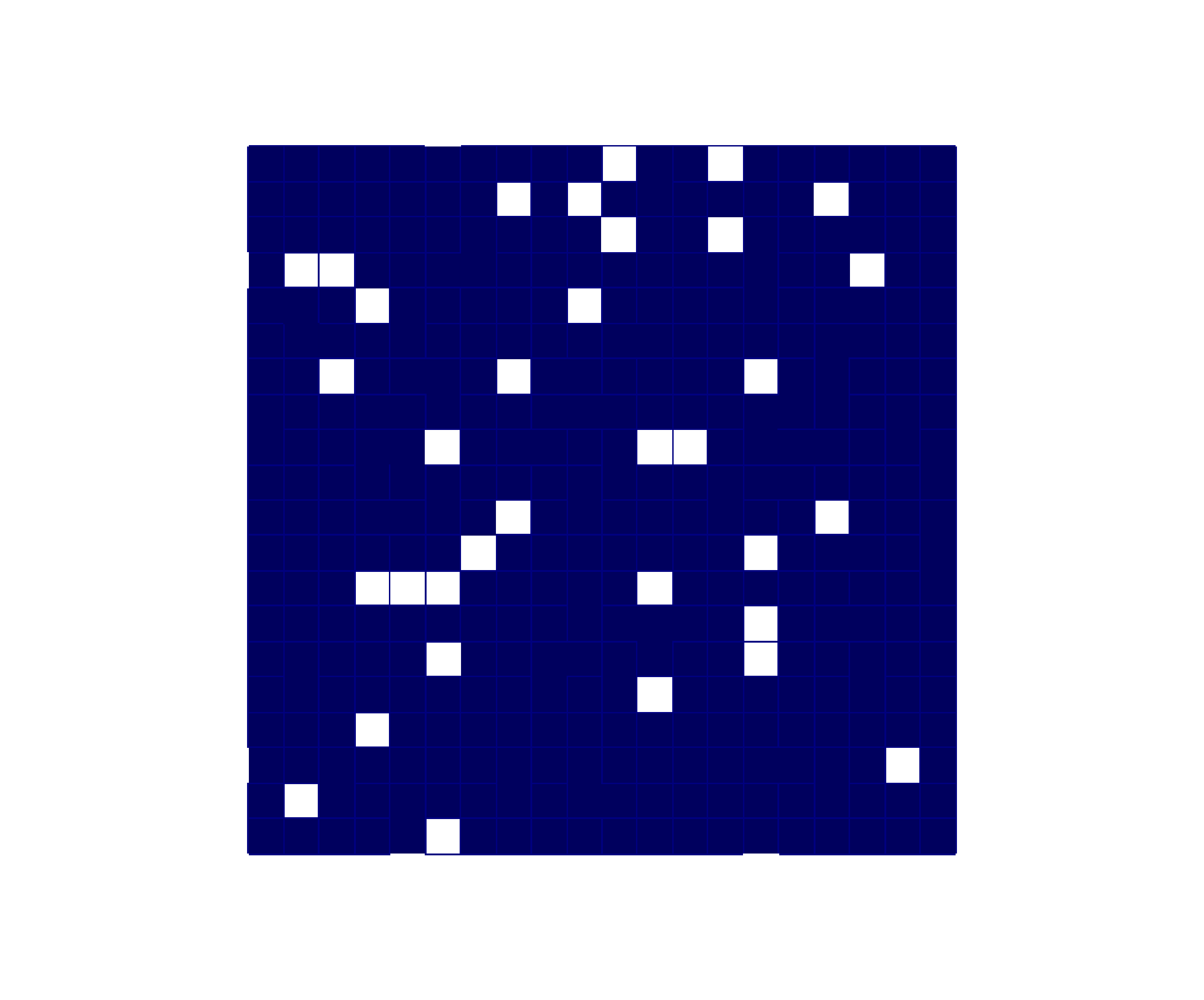}
\end{center}
\caption{$d=2$, $t=0.3$ (left), $0.5$ (middle), $0.9$ (right).}
\label{fig:tsunoda_2}
\end{figure}
\begin{figure}[htbp]
\begin{center}
	\includegraphics[width=40mm]{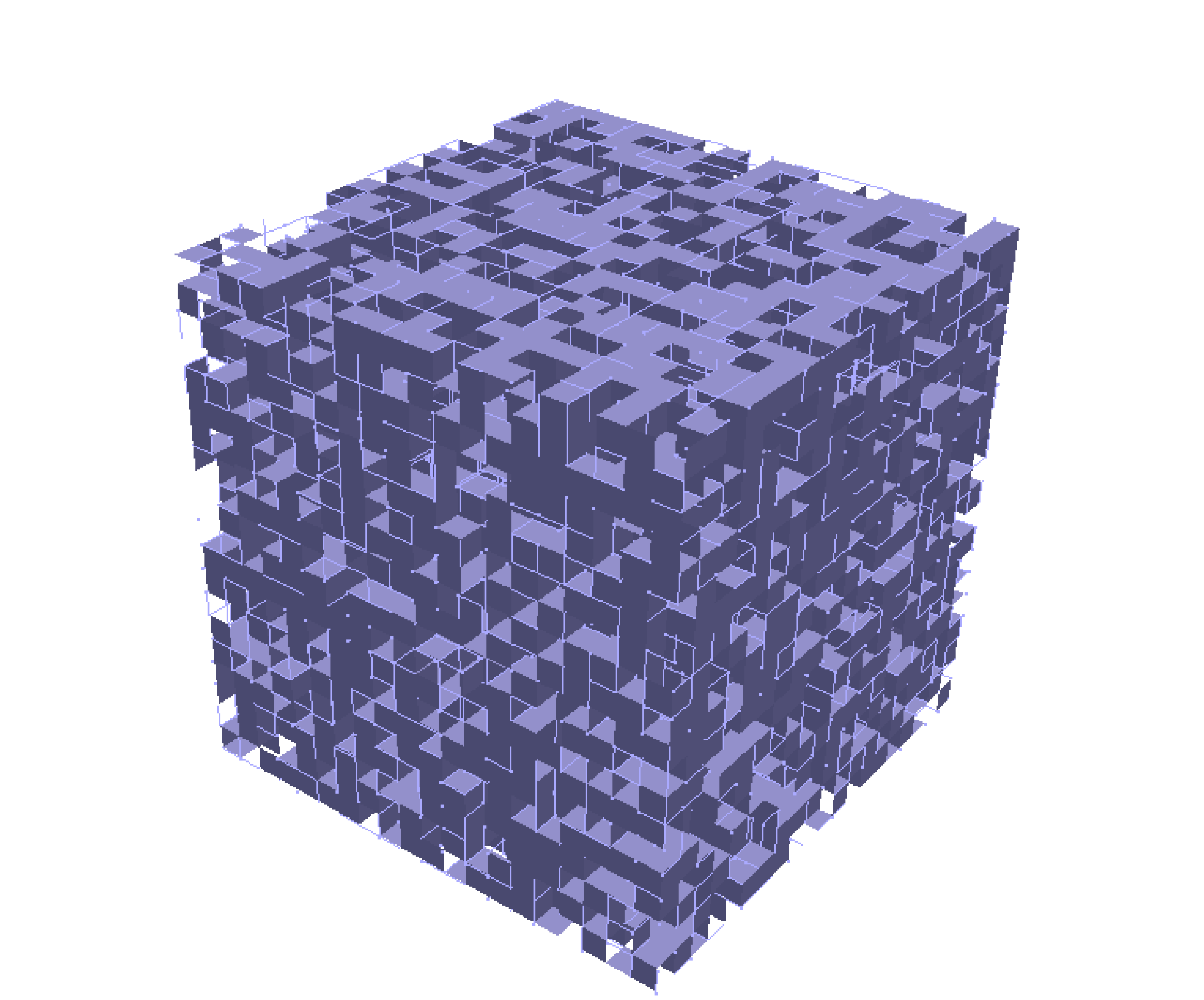}
	\includegraphics[width=40mm]{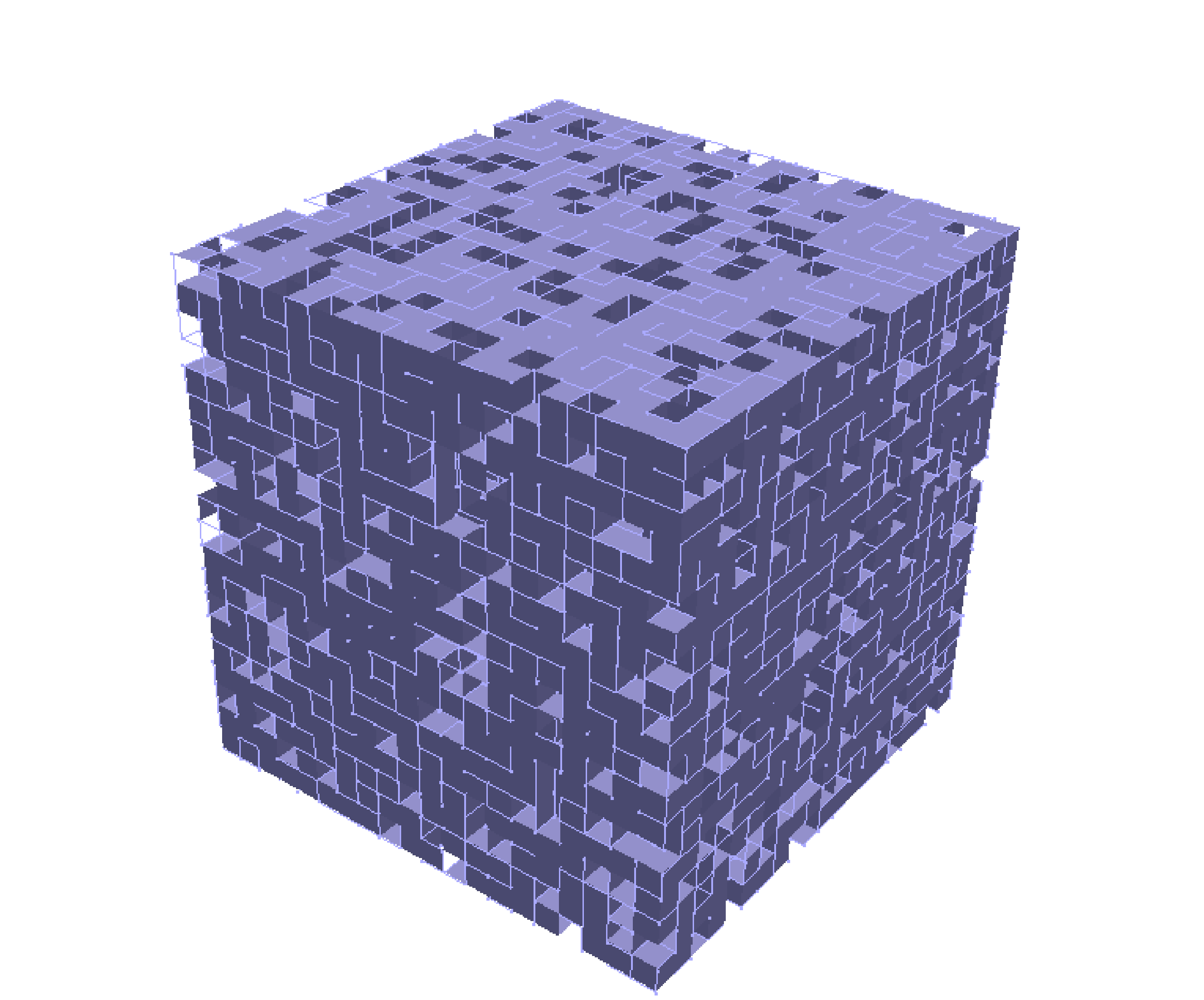}
	\includegraphics[width=40mm]{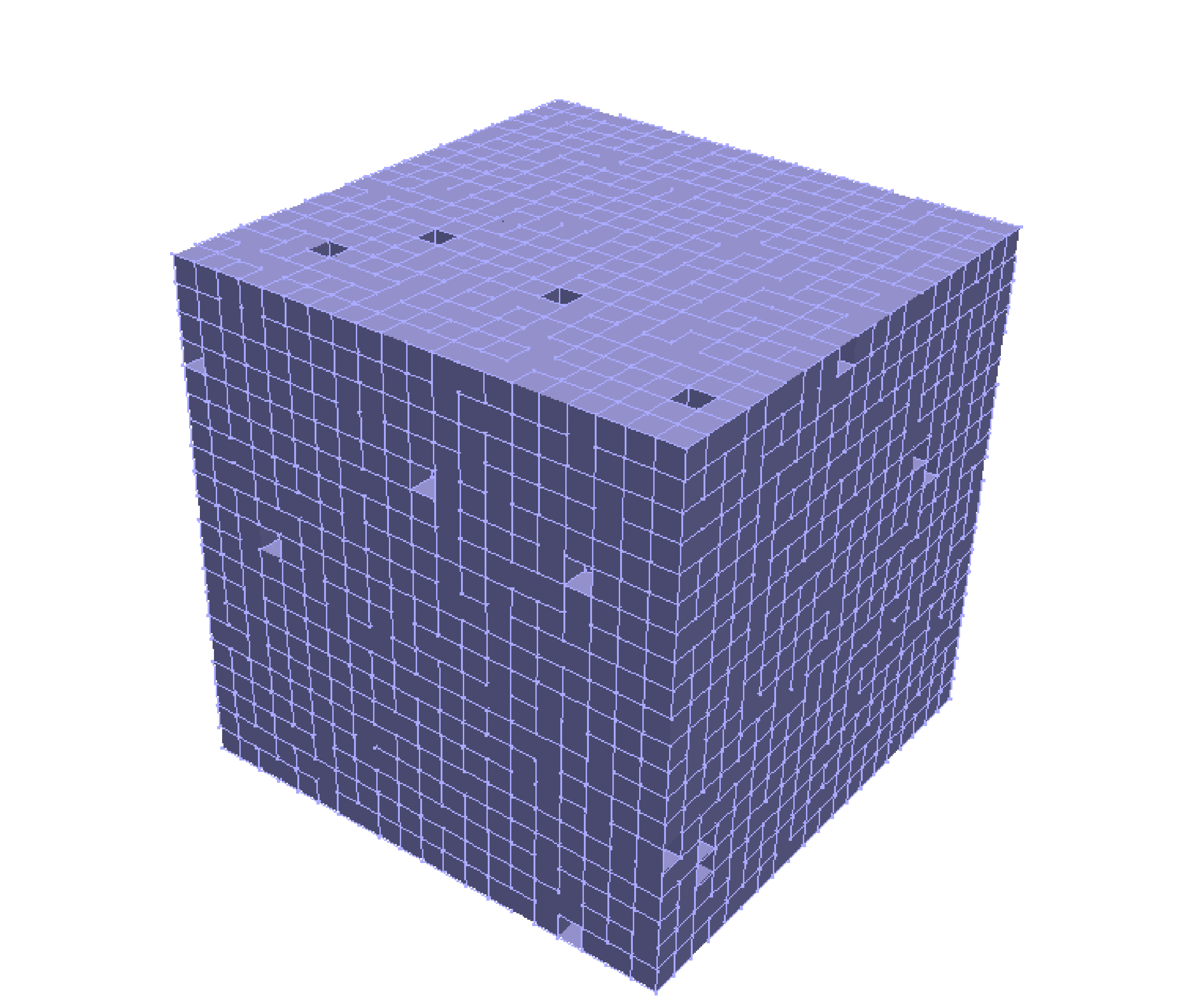}
\end{center}
\caption{$d=3$, $t=0.3$ (left), $0.5$ (middle), $0.9$ (right).}
\label{fig:tsunoda_3}
\end{figure}

\begin{example}\label{cfmodel}
{\rm
We examine in this example another model of random cubical sets,
which is analogous to the random simplicial complex models studied by Costa and Farber \cite{cf1,cf2}.
Fix $d\in\N$ and $0\le p_0, p_1, \cdots, p_d\le 1$.
In this example, we consider the configuration space $\{0,1\}^{\K^d}$ instead of $[0,1]^{\K^d}$.
For a cubical set $X$ in $\R^d$, we define the configuration $\om^X$ by
$\om^X_Q=1$ if $Q\subset X$, otherwise $\om^X_Q=0$.
We now consider the probability function $P_n:\Om\to[0,1]$ given by the formula
\begin{align*}
P_n(\om)\;=\;
\begin{cases}
\displaystyle \prod_{k=0}^d  p_k^{n_k(X)} (1-p_k)^{m_k(X)}&\;, \text{ if $\om=\om^X$ for some cubical set $X$}\;, \\
0 &\;, \text{ otherwise}\;.
\end{cases}
\end{align*}
In the above formula, $n_k(X)$ and $m_k(X)$ stand for
\begin{align*}
n_k(X)\;&:=\; \#\{Q\in\K_k^d: Q\subset X\cap \La_n\}\;, \\
m_k(X)\;&:=\; \#\{Q\in\K_k^d: Q\not\subset X\cap \La_n, \partial_k Q\subset X\cap \La_n\}\;,
\end{align*}
where $\partial_k Q$ is the $k$-th boundary of $Q$:
\begin{equation*}
\partial_k Q=\bigcup_{\tilde Q\in\K_{k-1}^d:\tilde Q\subset Q} \tilde{Q} \;.
\end{equation*}
From the Kolmogorov's extension theorem, there exists
a unique probability measure on the configuration space $\{0,1\}^{\K^d}$,
whose restriction to $\La_n$ coincides with $P_n$ for any $n\in\N$.
We note without proof that one can obtain results analogous to Theorem \ref{lln}
for the Betti numbers defined from this random cubical model.
}\end{example}

\subsection{Main results}
As mentioned in Section \ref{sec:intro}, main concern of this paper is the asymptotic behavior of the Betti numbers or the lifetime sum.
Recall the definition of the random cubical set $X^n(t)$. As it will be clarified in Section \ref{sec:LLN},
the $q$-th Betti number of $X^n(t)$ almost surely increases with the order $|\Lambda_n|=(2n)^d$,
which is the volume of the region $\Lambda_n$ where cubical sets are considered.
This is a consequence of nearly additive property discussed in Lemmas \ref{2}, \ref{3}
and the stationarity of $X^n(t)$.
To see the fluctuation of the $q$-th Betti number of $X^n(t)$ around its mean, 
the necessary normalization is of order $|\Lambda_n|^{1/2}$.
This order comes from the use of general CLT obtained in \cite{p}.
The interested reader may refer to \cite{p} for further details.
The same normalizations are necessary to obtain the LLN and the CLT for the lifetime sum.

To state our results, we introduce some notation.
Recall from Subsection \ref{ch} the definition of the $q$-th Betti number $\b_q(X)$
for a bounded cubical set $X$. Under the probability space $(\Om, \F, P)$,
$\b_q^n(t):=\b_q(X^n(t))$ can be regarded as a real-valued random variable.
We sometimes regard the $q$-th Betti number $\b_q^n(\cdot)$ as a functional
defined for all configurations $\om\in\Om$.
For a Borel subset $A$ in $\R^d$, let $|A|$ denote the Lebesgue measure of $A$.
Our first result states the LLN for the sequence of random variables $\{\b_q^n(t):n\in\N\}$.

\begin{theorem}\label{lln}
Fix integers $0 \le q < d$ and $t\in[0,1]$.
Then there exists a non-random constant $\widehat\b_q(t)$, which depends on $d$, $q$ and $t$, such that
the sequence of random variables $\{|\La_n|^{-1}\b_q^n(t):n\in\N\}$ converges to
$\widehat\b_q(t)$ as $n\to\infty$ almost surely.
\end{theorem}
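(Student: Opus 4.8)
The plan is to establish a law of large numbers for $\b_q^n(t)$ by exhibiting the Betti number as an almost-additive functional of the configuration restricted to $\La_n$, and then invoking an ergodic subadditive-type theorem. The guiding principle is that homology is a \emph{local} invariant up to boundary effects: if we partition $\La_n$ into subrectangles, the Betti number of the whole differs from the sum of the Betti numbers of the pieces only by a term controlled by the number of cubes lying near the cutting hyperplanes. Concretely, I would first fix $t$ and work with the random cubical set $X^n(t)=X(t)\cap\La_n$, regarding $\b_q^n(t)=\b_q(X^n(t))$ as a functional on $\Om$ that depends (essentially) only on $\{\om_Q : Q\cap\La_n\neq\emptyset\}$.

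The key structural input is the near-additivity asserted in the paper as Lemmas \ref{2} and \ref{3}. I would use these to show that for a decomposition of $\La_{n}$ into translated copies of a fixed smaller box, one has
\begin{equation*}
\Big| \b_q(X(t)\cap\La_n) - \sum_{\text{pieces } B} \b_q(X(t)\cap B) \Big| \;\le\; C\cdot(\text{number of cubes meeting the cut faces})\;,
\end{equation*}
where the right-hand side is of lower order than $|\La_n|$ because the cut faces have codimension one. This is the mechanism that converts the geometric additivity of homology into an analytic almost-subadditivity. Combined with the stationarity of $P$ under the translation group $\{\t_x\}_{x\in\Z^d}$, the functional $A\mapsto E[\b_q(X(t)\cap A)]$ behaves like a near-additive set function over boxes, so its normalization by $|\La_n|$ converges; this identifies the candidate limit $\widehat\b_q(t)=\lim_n |\La_n|^{-1}E[\b_q^n(t)]$.

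To upgrade convergence of the mean to almost-sure convergence of $|\La_n|^{-1}\b_q^n(t)$, I would appeal to a multiparameter ergodic theorem for spatially almost-additive (or almost-subadditive) processes over $\Z^d$, using precisely the stationarity and ergodicity hypotheses on $P$. The rough scheme is: decompose $\La_n$ into unit translates, write $\b_q^n(t)$ as a sum of stationary local contributions plus a boundary error that is $o(|\La_n|)$ almost surely, and apply the ergodic theorem to the stationary sum. The ergodicity assumption is what forces the limit to be the non-random constant $\widehat\b_q(t)$ rather than a random variable.

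The hard part will be controlling the boundary error \emph{almost surely}, not merely in expectation, and making the near-additivity quantitative enough to feed into the ergodic theorem. Homology does not split cleanly across a cut: chains (cycles and boundaries) can straddle the cutting hyperplane, so gluing two pieces can create or destroy cycles, and one must bound the rank change by the number of cubes adjacent to the interface. I expect Lemmas \ref{2} and \ref{3} to supply exactly this deterministic bound, after which the remaining work is the standard but delicate estimate that these interface contributions, summed over a dyadic or grid decomposition, are negligible relative to $|\La_n|$ with probability one. Verifying the integrability/moment conditions required by the chosen ergodic theorem (so that the subadditive ergodic framework applies) is the other technical point to watch.
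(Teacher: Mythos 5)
Your overall scheme is the same as the paper's: the deterministic near-additivity supplied by Lemmas \ref{2} and \ref{3}, a decomposition of $\La_n$ into translates of a fixed box, the multivariate ergodic theorem, and ergodicity to force the limit to be a constant. However, one step would fail as you state it. For a decomposition into translates of a \emph{fixed} box of size $K$, the interface error is \emph{not} of lower order than $|\La_n|$: each block of volume $\sim K^d$ contributes an interface of order $K^{d-1}$, so summing over the $\sim |\La_n|/K^d$ blocks gives an error of order $|\La_n|/K$, i.e.\ a constant (in $n$) fraction of the volume. Your ``unit translates'' variant is worse still: there the discrepancy is of order $|\La_n|$, since cutting into unit blocks destroys essentially all homology. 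The correct mechanism, which is what the paper implements, is a double limit: for fixed $K$, the ergodic theorem applied to the stationary field of block Betti numbers gives, as $n\to\infty$, the block average $|\La_K|^{-1}E[\b_q^K(t)]$ almost surely; only \emph{afterwards} does one send $K\to\infty$, which kills the $O(|\La_n|/K)$ error term and sends the block average to $\widehat\b_q(t)$ (along a subsequence, since the paper defines $\widehat\b_q(t)$ merely as a $\varlimsup$ of normalized expectations).

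Two further points where the paper's execution diverges usefully from your plan. First, no almost-subadditive ergodic theorem is invoked, and indeed none applies directly: Betti numbers are neither subadditive nor superadditive under unions, since gluing can both create and destroy cycles. The paper avoids this by placing the blocks $\La_K^i$ with gaps between them (centers spaced $2(K+1)$ apart for boxes of side $2K$), so the pieces are \emph{disjoint} cubical sets, the Betti number of their union $X^{m,K}(t)$ is \emph{exactly} the sum of the block Betti numbers, and the plain multivariate Birkhoff-type ergodic theorem suffices. Second, the issue you flag as the hard part---controlling the boundary error almost surely rather than in expectation---does not arise: by Lemmas \ref{2} and \ref{3}, the discrepancy $|\b_q^n(t)-\b_q^{m,K}(t)|$ is bounded by $3^d$ times the Lebesgue volume of the gap region $\La_n\setminus\bigsqcup_i\La_K^i$, a bound that is deterministic, i.e.\ uniform over all configurations $\om$, so no probabilistic estimate is needed there; likewise boundedness $\b_q^K(t)\le 3^d|\La_K|$ trivializes the integrability hypotheses you were planning to verify.
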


Recall the notion of the lifetime sum from the last paragraph of Subsection \ref{ch}.
We here define the lifetime sum $L_q^n$ by 
\begin{equation*}
L_q^n  \;=\;  \int_0^1 \b_q^n(t) dt \;.
\end{equation*}
If we ignore the effect of exceptional sets due to the almost sure convergence,
which depend on the continuum parameter $t$,
from Theorem \ref{lln} and the dominated convergence theorem,
one can obtain the convergence result for the lifetime sum $\{|\La_n|^{-1}L_q^n:n\in\N\}$.
Therefore we naturally expect that
\begin{equation*}
\lim_{n\to\infty} \dfrac{1}{|\La_n|}L_q^n \;=\; \int_0^1 \widehat{\b}_q(t) dt \;,
\end{equation*}
almost surely. To make this convergence rigorous, we shall prove the uniform convergence of Betti numbers.

\begin{theorem}\label{ulln}
Fix integers $0 \le q < d$ and $t\in[0,1]$.
Let $\widehat\b_q(t)$ be the almost sure convergent limit that appeared in Theorem \ref{lln}.
Assume that the marginal distribution function
$F^Q(t)=P(\om_Q \le t)$ is continuous in $t\in[0,1]$ for any $Q\in\K^d$.
Then
\begin{equation}\label{10}
\lim_{n\to\infty}\sup_{t\in[0,1]} | \dfrac{1}{|\La_n|} \b_q^n(t) - \widehat\b_q (t) | \;=\; 0 \,,
\end{equation}
almost surely.
\end{theorem}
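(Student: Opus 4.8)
The plan is to upgrade the pointwise limit of Theorem \ref{lln} to the uniform limit \eqref{10} by a Glivenko--Cantelli type argument. The obstruction is that $t\mapsto\b_q^n(t)$ is not monotone, so one cannot invoke Dini's theorem directly; the key idea is that, although $\b_q^n$ fluctuates, its \emph{increments} are dominated by monotone, additive cube counts to which the ergodic theorem applies. For a configuration $\om$ set $N_k^n(t):=\#\K_k^d(X^n(t))=\sum_{Q\in\K_k^d,\,Q\subset\La_n}\mathbf{1}[Q\subset X(t)]$, the number of $k$-cubes in $X^n(t)$. The central first step is the deterministic bound, valid for all $0\le s\le t\le 1$,
\[
| \b_q^n(t) - \b_q^n(s) | \;\le\; \big( N_q^n(t) - N_q^n(s) \big) + \big( N_{q+1}^n(t) - N_{q+1}^n(s) \big) \;.
\]
Since $X^n(s)\subset X^n(t)$ are both cubical sets, $X^n(t)$ is obtained from $X^n(s)$ by adjoining the finitely many cubes of $\K^d(X^n(t))\setminus\K^d(X^n(s))$, which can be ordered by nondecreasing dimension so that every intermediate stage is again a cubical set (all faces of an adjoined cube are already present). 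Along such a filtration the incremental principle of persistent homology applies: adjoining one $k$-cube changes exactly one Betti number by exactly one, raising $\b_k$ or lowering $\b_{k-1}$. Hence only the adjoined cubes of dimensions $q$ and $q+1$ can affect $\b_q$, giving the displayed inequality. I expect this to be the main obstacle: making the incremental count rigorous for the cubical chain complex, where the association $\om\mapsto X(t)$ silently enlarges the active cube set (Remark \ref{rev1}), and checking the rule for the $\Z$-Betti number read as the rank of the free part (e.g. after tensoring with $\Q$).

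\textbf{Limits of the counts and their continuity.} Because $N_k^n(t)$ is an additive functional of the stationary ergodic field (up to negligible boundary effects on $\partial\La_n$), the multidimensional ergodic theorem gives $|\La_n|^{-1}N_k^n(t)\to\widehat N_k(t)$ almost surely, where $\widehat N_k(t)$ is the finite sum, over the $\Z^d$-orbit representatives $Q$ of $k$-cubes, of $P(Q\subset X(t))$. The point at which the hypothesis that each $F^Q$ is continuous enters is the continuity of $t\mapsto\widehat N_k(t)$. This rests on the identity
\[
\{ Q \subset X(t) \} \;=\; \bigcup_{Q' \supseteq Q} \{ \om_{Q'} \le t \} \;,
\]
the union running over the finitely many cofaces $Q'$ of $Q$: a point in the relative interior of $Q$ forces any elementary cube containing it to contain $Q$, so $Q\subset X(t)$ holds precisely when some coface of $Q$ has value at most $t$. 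Each $\{\om_{Q'}\le t\}$ has probability $F^{Q'}(t)$, which is atomless by assumption, so the increasing union has no jumps and $P(Q\subset X(t))$, hence $\widehat N_k(t)$, is continuous in $t$.

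\textbf{Continuity of the limit and packaging.} Passing to the limit in the key inequality for each fixed pair $s\le t$ (the four limits exist by Theorem \ref{lln} and the ergodic theorem) yields the deterministic inequality $|\widehat\b_q(t)-\widehat\b_q(s)|\le(\widehat N_q(t)-\widehat N_q(s))+(\widehat N_{q+1}(t)-\widehat N_{q+1}(s))$; continuity of $\widehat N_q,\widehat N_{q+1}$ then forces $\widehat\b_q$ to be continuous. Now fix $\e>0$, use uniform continuity of $\widehat N_q$ and $\widehat N_{q+1}$ to choose a finite partition $0=s_0<\cdots<s_m=1$ on each cell of which both $\widehat N_k$ oscillate by less than $\e$, and work on the almost sure event where $|\La_n|^{-1}\b_q^n(s_i)\to\widehat\b_q(s_i)$ and $|\La_n|^{-1}N_k^n(s_i)\to\widehat N_k(s_i)$ for all $i$ and $k\in\{q,q+1\}$ (a finite intersection of probability-one events). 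For $t\in[s_{i-1},s_i]$, the key inequality together with the monotonicity $N_k^n(s_{i-1})\le N_k^n(t)\le N_k^n(s_i)$ bounds $\big||\La_n|^{-1}\b_q^n(t)-|\La_n|^{-1}\b_q^n(s_{i-1})\big|$ by the full-cell count increment, which tends to at most $2\e$; combining this with the pointwise convergence at $s_{i-1}$ and the oscillation bound on $\widehat\b_q$ makes the supremand in \eqref{10} at most a fixed multiple of $\e$ for all large $n$, uniformly in $t$. Letting $\e\downarrow 0$ gives the claim, so everything downstream of the first step is the ergodic theorem plus a standard Dini/Glivenko--Cantelli packaging.
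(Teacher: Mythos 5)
Your proposal is correct and follows essentially the same route as the paper's proof: discretize $[0,1]$, invoke the pointwise LLN of Theorem \ref{lln} at the finitely many grid points, and control the oscillation of $\b_q^n$ between grid points by a monotone cube-count bound on Betti increments (the paper's Lemma \ref{2}) whose ergodic-theorem limit is small by continuity of the marginal distributions --- exactly your Glivenko--Cantelli packaging. Your two refinements --- sharpening the increment bound to count only $q$- and $(q+1)$-cubes via the incremental principle, and deriving continuity of $\widehat\b_q$ from the pathwise limiting inequality rather than from equicontinuity of the means plus Ascoli--Arzel\`a as in the paper's Lemma \ref{9} --- are valid but not essential, since the paper's cruder count of all elementary cubes with $\om_Q$ falling in a small window already suffices.
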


The LLN for the lifetime sum immediately follows from Theorem \ref{ulln}.

\begin{corollary}\label{14}
Under the assumptions of Theorem \ref{ulln}, it holds that
\begin{equation*}
\lim_{n\to\infty} \dfrac{1}{|\La_n|}L_q^n \;=\; \int_0^1 \widehat{\b}_q(t) dt \;,
\end{equation*}
almost surely.
\end{corollary}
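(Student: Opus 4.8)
The plan is to deduce Corollary \ref{14} directly from the uniform convergence established in Theorem \ref{ulln}. The key observation is that the lifetime sum admits the integral representation $L_q^n = \int_0^1 \b_q^n(t)\,dt$, so dividing by $|\La_n|$ gives
\begin{equation*}
\frac{1}{|\La_n|} L_q^n \;=\; \int_0^1 \frac{1}{|\La_n|}\b_q^n(t)\,dt \;.
\end{equation*}
Since we wish to compare this with $\int_0^1 \widehat\b_q(t)\,dt$, I would estimate the absolute difference of the two integrals by the integral of the absolute difference of integrands, using the elementary bound $|\int_0^1 f\,dt - \int_0^1 g\,dt| \le \int_0^1 |f-g|\,dt$ together with the fact that $[0,1]$ has Lebesgue measure one.

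First I would fix a configuration $\om$ in the almost sure event on which the conclusion \eqref{10} of Theorem \ref{ulln} holds; this event has full $P$-measure. On this event, write
\begin{equation*}
\Big | \frac{1}{|\La_n|} L_q^n - \int_0^1 \widehat\b_q(t)\,dt \Big |
\;\le\; \int_0^1 \Big | \frac{1}{|\La_n|}\b_q^n(t) - \widehat\b_q(t) \Big |\,dt
\;\le\; \sup_{t\in[0,1]} \Big | \frac{1}{|\La_n|}\b_q^n(t) - \widehat\b_q(t) \Big | \;.
\end{equation*}
By Theorem \ref{ulln} the rightmost quantity tends to $0$ as $n\to\infty$, and hence so does the left-hand side. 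This proves the claimed almost sure convergence.

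Two minor points warrant attention before writing this cleanly. One must confirm that $t\mapsto \b_q^n(t)$ is measurable (indeed it is piecewise constant in $t$, changing only at the finitely many jump parameters $t_1 < \cdots < t_n$ determined by the configuration, so the integral is well-defined and in fact a finite sum) and that $\widehat\b_q(\cdot)$ is integrable on $[0,1]$; integrability of the limit follows because, on the full-measure event, $\widehat\b_q$ is the uniform limit of the bounded measurable functions $|\La_n|^{-1}\b_q^n(\cdot)$, hence measurable, and it is bounded by the almost sure bound on $|\La_n|^{-1}\b_q^n$ coming from the nearly additive estimates. I expect no genuine obstacle here: all the analytic difficulty has already been absorbed into Theorem \ref{ulln}, whose proof supplies the uniform control that upgrades the pointwise (in $t$) convergence of Theorem \ref{lln} to a statement strong enough to exchange limit and integral. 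The hard part is entirely upstream in establishing the uniform convergence; given that, the corollary is a short and essentially formal consequence.
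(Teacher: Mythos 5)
Your proof is correct and is essentially identical to the paper's own argument: both bound $\bigl|\,|\La_n|^{-1}L_q^n - \int_0^1 \widehat\b_q(t)\,dt\,\bigr|$ by $\int_0^1 \bigl|\,|\La_n|^{-1}\b_q^n(t) - \widehat\b_q(t)\,\bigr|\,dt$, then by the supremum over $t\in[0,1]$, and invoke Theorem \ref{ulln}. The extra measurability and integrability remarks you include are harmless and are implicitly taken for granted in the paper.
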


\begin{proof}
The convergence of the sequence $\{|\La_n|^{-1}L_q^n(t):n\in\N\}$ is a direct
consequence of Theorem \ref{ulln}. 
Indeed,
it follows from the definition of $L_q^n$ that
\begin{align*}
|\dfrac{1}{|\La_n|}L_q^n - \int_0^1 \widehat{\b}_q(t) dt|
& \;\le\;
\int_0^1  | \dfrac{1}{|\La_n|} \b_q^n(t) - \widehat\b_q (t) | dt \\
& \;\le\; \sup_{t\in[0,1]} | \dfrac{1}{|\La_n|} \b_q^n(t) - \widehat\b_q (t) | \;.
\end{align*}
From Theorem \ref{ulln}, the last expression vanishes as $n\to\infty$ almost surely,
which completes the proof of Corollary \ref{14}.
\end{proof}

\begin{remark}{\rm 
Hiraoka and Shirai \cite{hs2} showed that the expectation of the lifetime sum
$L_q^n$ is of order $\Theta(|\La_n|)=\Theta(n^d)$
as $n\to\infty$ for cubical complexes shown in Example \ref{bernoulli}.
From Corollary \ref{14} together with the positivity of the limit (cf. Proposition \ref{21}),
we can obtain a refinement of Theorem 3.3 in \cite{hs2}.
}\end{remark}

In the rest of this subsection, we also present the CLT for
the Betti number $\b_q^n(t)$ and the lifetime sum $L_q^n$. To state our results,
we need an additional assumption that the probability measure $P$ is a product measure on $\Om$.
Note that Examples \ref{bernoulli} and \ref{closedface} satisfy this assumption.

\begin{theorem}\label{clt}
Fix integers $0 \le q < d$ and $t\in[0,1]$.
Assume that the probability measure $P$ is a product measure on $\Om$.
Then there exists a constant $\si^2\ge0$, depending on $d,q$ and $t$, such that as $n\to\infty$
\begin{equation*}
\dfrac{1}{|\La_n|} E\Big[\big(\b_q^n(t) - E[\b_q^n(t)]\big)^2\Big] \;\to\; \si^2 \;,
\end{equation*}
and
\begin{equation*}
\dfrac{1}{|\La_n|^{1/2}}\Big (\b_q^n(t) - E[\b_q^n(t)] \Big) \;\Rightarrow\; \mathcal N(0,\si^2) \;,
\end{equation*}
where $\Rightarrow$ denotes convergence in law
and $\mathcal N(0,\si^2)$ stands for the Gaussian distribution with mean $0$ and variance $\si^2$.
\end{theorem}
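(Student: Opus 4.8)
The plan is to realize the centered Betti number $\beta_q^n(t)-E[\beta_q^n(t)]$ as a sum of martingale differences indexed by the independent marks and then to invoke the general central limit theorem of \cite{p}; the two displayed assertions will emerge simultaneously from that framework. The first ingredient I would establish is that each mark has bounded influence on the Betti number. Adding or deleting a single elementary cube from a cubical set changes every Betti number by at most one, and moving one coordinate $\omega_Q$ across the threshold $t$ only alters the membership of $Q$ and of its $O(1)$ faces in $X^n(t)$; hence there is a constant $C=C_d$ such that $|\beta_q^n(t)(\omega)-\beta_q^n(t)(\omega')|\le C$ whenever $\omega$ and $\omega'$ differ in a single coordinate. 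Only the finitely many cubes meeting $\Lambda_n$ are relevant, so $\beta_q^n(t)$ is a functional of $N_n=\Theta(|\Lambda_n|)$ independent marks with uniformly bounded differences.

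Next I would fix an enumeration $Q_1,\dots,Q_{N_n}$ of these cubes, set $\mathcal F_i=\sigma(\omega_{Q_1},\dots,\omega_{Q_i})$, and define the Doob martingale differences $D_{n,i}=E[\beta_q^n(t)\mid\mathcal F_i]-E[\beta_q^n(t)\mid\mathcal F_{i-1}]$, so that
\[
\beta_q^n(t)-E[\beta_q^n(t)]\;=\;\sum_{i=1}^{N_n}D_{n,i}.
\]
Because $P$ is a product measure, the bounded-influence estimate gives $|D_{n,i}|\le C$ uniformly in $i$ and $n$ (each $D_{n,i}$ is the conditional expectation of the change produced by re-sampling the single mark $\omega_{Q_i}$). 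Since the summands are uniformly bounded and $N_n=\Theta(|\Lambda_n|)$, one has $|\Lambda_n|^{-1/2}\max_i|D_{n,i}|\to 0$, which yields the Lindeberg condition for the normalization $|\Lambda_n|^{1/2}$.

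The remaining and central step is to prove that $|\Lambda_n|^{-1}\sum_{i}E[D_{n,i}^2]=|\Lambda_n|^{-1}\mathrm{Var}(\beta_q^n(t))$ converges to some constant $\sigma^2\ge0$ (this is the first displayed assertion) and that the conditional variances $|\Lambda_n|^{-1}\sum_i E[D_{n,i}^2\mid\mathcal F_{i-1}]$ converge in probability to the same $\sigma^2$. This is where the near-additive property of Lemmas \ref{2} and \ref{3} is indispensable: it shows that the influence of $\omega_{Q_i}$ on the \emph{global} invariant $\beta_q^n(t)$ is, in an averaged sense, localized, so that $D_{n,i}$ is well approximated by a functional of the marks in a bounded neighborhood of $Q_i$. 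Granting such stabilization, the stationarity of $P$ forces the contributions of all interior cubes to a common value while the $O(n^{d-1})=o(|\Lambda_n|)$ boundary cubes contribute negligibly, and the resulting structure is exactly the input required by the general CLT of \cite{p}, which then delivers both the variance convergence and the asymptotic normality. I expect this localization to be the main obstacle: homology is a global quantity, so a priori a single mark could propagate its effect over a long range, and the real content is to combine the near-additivity with the product structure of $P$ to rule this out after averaging. A minor additional point is to check that $\sigma^2$ does not depend on the chosen enumeration or on the treatment of boundary cubes, which again follows from stationarity once stabilization is available; note that the statement permits $\sigma^2=0$, so no lower bound on the variance is needed.
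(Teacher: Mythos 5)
Your scaffolding matches the paper's: the proof is indeed an application of Penrose's general CLT (Theorem \ref{pclt}), translation invariance is immediate, and your bounded-influence estimate is exactly how the paper verifies the bounded moment condition (Lemma \ref{2} gives the deterministic bound $|D_O\beta_q(\Lambda_n,t)|\le 2^{d+1}$, so all moments are uniformly bounded). The gap is at precisely the step you yourself flag as ``the main obstacle'': the stabilization condition. Penrose's theorem requires that the add-one cost $(D_O\beta_q)(A_n)$ \emph{converges} (in probability, to a limit random variable $D_H(\infty)$) along any sequence $A_n$ with $\varliminf A_n=\Z^d$; you never prove this, writing ``granting such stabilization'' and suggesting it should follow by combining the near-additivity of Lemmas \ref{2} and \ref{3} with the product structure of $P$ ``after averaging.'' That route does not work: Lemma \ref{2} gives only \emph{boundedness} of the add-one cost, and a bounded sequence need not converge --- a priori the effect of resampling the marks at the origin could oscillate indefinitely as the window grows; Lemma \ref{3} is a volume-counting bound with no bearing on a single resampled site. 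Moreover, an ``in an averaged sense, localized'' statement is not an admissible substitute for the hypothesis of Theorem \ref{pclt}, which concerns the add-one cost itself, not its average; without it your conditional-variance convergence (and hence both displayed conclusions) remains unproven.

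What actually closes this gap in the paper is Proposition \ref{sta}, a purely deterministic, configuration-wise topological argument that your proposal does not contain and that no amount of probabilistic averaging replaces. For cubical sets $X,Y$ with bounded symmetric difference, setting $W=X\cap Y$, one writes $\beta_q(X\cap\Lambda_n)-\beta_q(Y\cap\Lambda_n)$ as a signed sum of relative rank differences such as $\rank Z_q(X_n)-\rank Z_q(W_n)$ and $\rank B_q(X_n)-\rank B_q(W_n)$, and then shows each is bounded in $n$ \emph{and monotone}: the cycle-group difference is nondecreasing because the natural map $Z_q(X_n)/Z_q(W_n)\to Z_q(X_m)/Z_q(W_m)$ is injective, and the boundary-group difference is nonincreasing because the corresponding map on $B_q$ quotients is surjective for large $n\le m$ (using boundedness of $X\triangle Y$). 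Bounded plus monotone gives eventual constancy, i.e.\ deterministic stabilization of the Betti-number difference, which is then applied to the pair $X(t)$, $X^*(t)$ obtained by resampling the origin. This monotonicity device is the real content of the proof of Theorem \ref{clt}; your martingale/Lindeberg apparatus is either standard or already subsumed in Penrose's theorem, so without an argument of this kind your proposal does not constitute a proof.
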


The proof of Theorem \ref{clt} relies on the general result developed by Penrose \cite{p}.
The idea to prove Theorem \ref{clt} also permits us to obtain the CLT
for the lifetime sum. Recall the definition of the lifetime sum $L_q^n$ introduced after Theorem \ref{lln}.

\begin{theorem}\label{cltforlts}
Fix integers $0 \le q < d$.
Assume that the probability measure $P$ is a product measure on $\Om$.
There exists a constant $\tau^2\ge0$, depending on $d$ and $q$,
such that as $n\to\infty$
\begin{equation*}
\dfrac{1}{|\La_n|} E\Big[\big( L_q^n - E[L_q^n]\big)^2\Big] \;\to\; \tau^2 \;,
\end{equation*}
and
\begin{equation*}
\frac{1}{|\La_n|^{1/2}}\Big( L_q^n - E[L_q^n] \Big) \;\Rightarrow\; \mathcal N(0,\tau^2) \;.
\end{equation*}
\end{theorem}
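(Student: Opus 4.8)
The plan is to deduce Theorem~\ref{cltforlts} by applying the same general central limit theorem of Penrose~\cite{p} that underlies Theorem~\ref{clt}, now to the single functional $L_q^n=\int_0^1\b_q^n(t)\,dt$ rather than to $\b_q^n(t)$ for a fixed $t$. The point is that the two structural inputs that make Penrose's theorem applicable to $\b_q^n(t)$—namely the nearly additive property of Betti numbers furnished by Lemmas~\ref{2} and~\ref{3}, and the fact that resampling the value $\om_Q$ at a single elementary cube $Q$ perturbs $\b_q^n(t)$ by at most a constant $c_d$ depending only on $d$—are preserved, and crucially are \emph{uniform in} $t$, after integrating over $t\in[0,1]$.

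First I would treat the variance. Since $0\le \b_q^n(t)\le C|\La_n|$ and $t\mapsto\b_q^n(t)$ is measurable, Fubini's theorem gives
\begin{equation*}
E\big[(L_q^n-E[L_q^n])^2\big] \;=\; \int_0^1\!\!\int_0^1 \mathrm{Cov}\big(\b_q^n(s),\b_q^n(t)\big)\,ds\,dt \;.
\end{equation*}
Dividing by $|\La_n|$, the Cauchy--Schwarz bound $|\mathrm{Cov}(\b_q^n(s),\b_q^n(t))|\le C|\La_n|$ produces an integrand that is uniformly bounded in $n$, $s$ and $t$. Hence, once one establishes the bivariate analog of the variance statement in Theorem~\ref{clt}, i.e.\ the pointwise convergence $|\La_n|^{-1}\mathrm{Cov}(\b_q^n(s),\b_q^n(t))\to\si(s,t)$ for each pair $(s,t)$, the dominated convergence theorem yields $|\La_n|^{-1}E[(L_q^n-E[L_q^n])^2]\to\tau^2$ with $\tau^2=\int_0^1\int_0^1\si(s,t)\,ds\,dt$. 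The limiting covariance $\si(s,t)$ is obtained exactly as in the proof of Theorem~\ref{clt}, carried out for the pair of parameters $(s,t)$ instead of a single $t$; this is where the near-additivity and the product structure of $P$ enter.

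For the convergence in law I would verify the hypotheses of Penrose's CLT directly for the functional $\om\mapsto L_q^n(\om)$. The essential estimate is the bounded-difference bound: if $\om$ and $\om'$ differ only in the coordinate $\om_Q$ of one cube $Q$, then for every $t$ one has $|\b_q^n(t;\om')-\b_q^n(t;\om)|\le c_d$, because adding or deleting a single cube changes each Betti number by at most $c_d$; integrating in $t$ gives $|L_q^n(\om')-L_q^n(\om)|\le c_d$, uniformly in $n$ and $Q$. Together with the product (hence independent) structure of $P$ and the finite range of dependence supplied by Lemmas~\ref{2} and~\ref{3}, this furnishes precisely the moment and stabilization conditions required by Penrose~\cite{p}, and the CLT for $|\La_n|^{-1/2}(L_q^n-E[L_q^n])$ with the variance $\tau^2$ identified above follows.

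The main obstacle is that the argument cannot proceed by the naive route of writing $L_q^n$ as a sum of i.i.d.\ local contributions plus a negligible remainder. At the LLN scale $|\La_n|=\Theta(n^d)$ the boundary corrections in Lemmas~\ref{2} and~\ref{3} are of order $n^{d-1}$ and hence negligible, but at the CLT scale $|\La_n|^{1/2}=\Theta(n^{d/2})$ these boundary terms are of the same or larger order once $d\ge2$, so they cannot simply be discarded. This is exactly why one must invoke Penrose's general theorem, which accommodates the global (non-local) nature of homology through the resampling/bounded-difference mechanism rather than through an explicit additive decomposition. The remaining delicate point will be to make the uniformity in $t$ rigorous—both for the bounded-difference estimate and for the interchange of limit and double integral in the variance computation—so that the single constant $\tau^2$ is well defined and equals $\int_0^1\int_0^1\si(s,t)\,ds\,dt$.
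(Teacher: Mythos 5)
Your overall strategy---applying Penrose's general CLT (Theorem \ref{pclt}) directly to the functional $L_q^n$---is exactly the route the paper takes, and your verification of the bounded moment condition (resampling one coordinate changes $\b_q^n(t)$ by a constant uniformly in $t$, hence changes $L_q^n$ by a constant after integrating) matches the paper's estimate \eqref{26}. However, there is a genuine gap in your treatment of the \emph{stability} condition. Penrose's theorem requires more than a uniform bound on $(D_OL_q)(\La_n)$: it requires that this random variable \emph{converges} in probability to a limit $D_{L_q}(\infty)$ along any sequence of windows increasing to $\Z^d$. You assert this follows from the bounded-difference estimate ``together with the product structure of $P$ and the finite range of dependence supplied by Lemmas \ref{2} and \ref{3},'' but neither ingredient delivers convergence: Lemmas \ref{2} and \ref{3} are inclusion-monotonicity bounds, not locality statements, and homology has no finite range of dependence---resampling $\om_0$ can create or destroy cycles whose representatives extend across the entire window. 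This is precisely why the paper proves Proposition \ref{sta}, a nontrivial deterministic statement that for cubical sets $X,Y$ whose symmetric difference is bounded, the difference $\b_q(X\cap A_n)-\b_q(Y\cap A_n)$ is eventually constant along any sequence $A_n$ with $\varliminf A_n=\Z^d$ (proved by rank arguments for the maps between the quotients $Z_q(X_n)/Z_q(W_n)$ and $B_q(X_n)/B_q(W_n)$). With that in hand, the paper obtains stability for the lifetime sum by writing $(D_OL_q)(\La_n)=\int_0^1\bigl(\b_q(X^n(t))-\b_q(X^{*,n}(t))\bigr)\,dt$, noting the integrand is bounded by \eqref{26} and converges pointwise in $t$ by Proposition \ref{sta}, and invoking the dominated convergence theorem. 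Without this (or an equivalent stabilization argument), your application of Penrose's theorem is unjustified.

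A secondary point: your variance argument is both heavier than necessary and incomplete. Once the hypotheses of Theorem \ref{pclt} are verified for $L_q^n$, the variance convergence $|\La_n|^{-1}E[(L_q^n-E[L_q^n])^2]\to\tau^2$ is part of Penrose's conclusion, so the Fubini decomposition into covariances is not needed at all. Moreover, the pointwise covariance convergence $|\La_n|^{-1}\mathrm{Cov}(\b_q^n(s),\b_q^n(t))\to\si(s,t)$ that you invoke is not ``exactly as in the proof of Theorem \ref{clt}'': that theorem concerns a single parameter, and the bivariate statement would require a separate argument (say, applying Penrose's theorem to $\b_q^n(s)\pm\b_q^n(t)$ and polarizing), which itself hinges on the same stabilization input that is missing from your proposal.
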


\subsection{Computations}
In this subsection, we show numerical experiments on asymptotic behaviors
of normalized Betti numbers $|\Lambda_n|^{-1}\beta^n_q(t)$ in order to give intuitive understanding of the main results. 

We use Example \ref{bernoulli} with $d=3$ and $k=2$, and Example \ref{closedface} with $d=3$.  
In both models, we constructed random cubical filtrations 
for $n=10,20,30,40,50,60,70,80$ with 5 samples for each $n$, and computed those Betti numbers. Then, we observed the convergence of the Betti numbers around $n=50\sim 80$. Figure \ref{fig:betti_curves} shows the 1st Betti numbers of one sample only for $n=10, 50, 80$ (top: Example \ref{bernoulli}, bottom: Example \ref{closedface}), and we actually see the overlap for two curves of $n=50$ and $n=80$.
We also observed that the standard deviation of the samples is quite small for large $n$. For example, in the case of Example \ref{bernoulli} with $n=80$, the maximum of the standard deviation for $t\in[0,1]$ is approximately $4.595\times 10^{-4}$.

\begin{figure}[htbp]
\begin{center}
       \includegraphics[width=80mm]{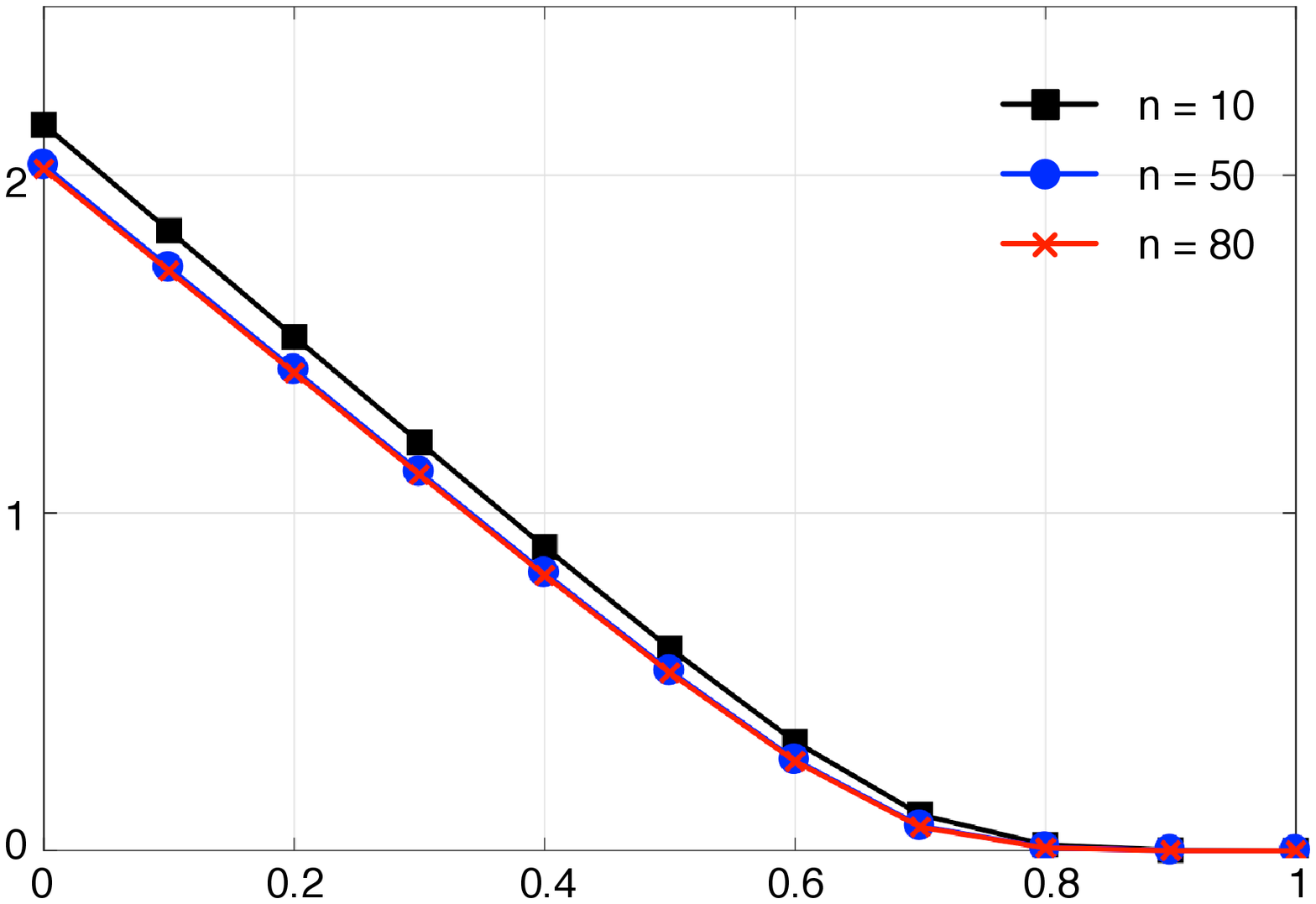}
       \includegraphics[width=80mm]{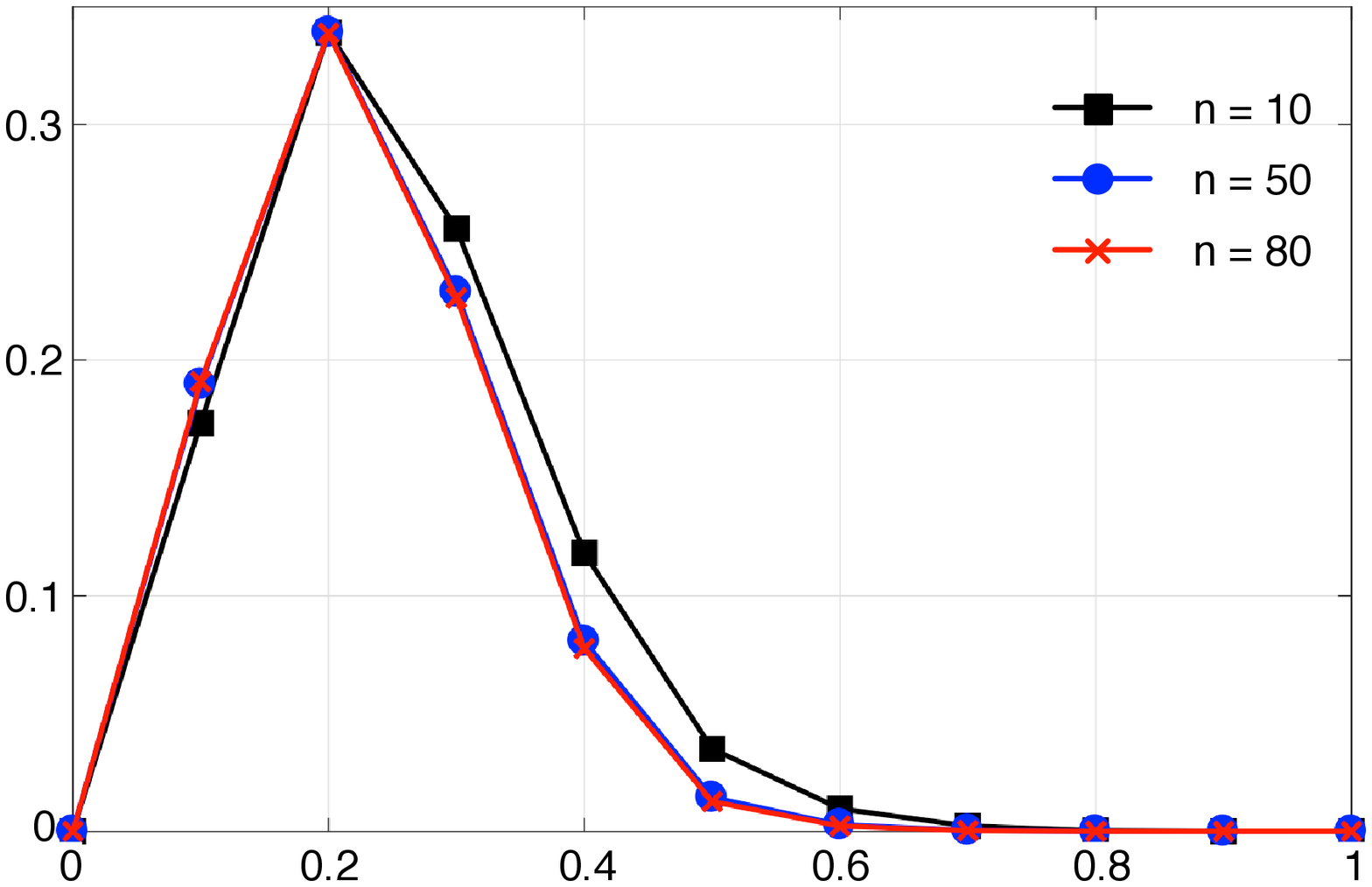}       
\end{center}
    \caption{Computations of normalized Betti numbers 
    $|\Lambda_n|^{-1}\beta^n_q(t)$ for Example \ref{bernoulli} ($d=3$ and $k=2$)
    and Example \ref{closedface} ($d=3$).}
    \label{fig:betti_curves}
\end{figure}

\section{Law of large numbers}\label{sec:LLN}
We prove in this section Theorems \ref{lln} and \ref{ulln}.
Throughout this section, fix integers $0 \le q < d$, where
$d$ corresponds to the dimension of the state space $\R^d$ and
$q$ the dimension of the Betti number $\b_q(\cdot)$, respectively.
For a cubical set $X$ in $\R^d$, let $\#X$ be the number of elementary cubes contained in $X$.

We start with an estimate on the difference of Betti numbers of two cubical sets.
The following lemmas will be repeatedly used in this paper.
For the case of simplicial complexes, see Lemma 2.2 in \cite{ysa}. The extension to the persistent Betti numbers is also shown in Lemma 2.11 in \cite{dhs}.

\begin{lemma}\label{2}
Let $X$ and $Y$ be cubical sets in $\R^d$ with $X \subset Y$. Then
\begin{equation*}
|\b_q(Y) - \b_q(X)| \;\le\; \# Y - \# X \;.
\end{equation*}
\end{lemma}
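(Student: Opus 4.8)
The plan is to prove the inequality $|\b_q(Y) - \b_q(X)| \le \#Y - \#X$ by reducing to the case where $Y$ is obtained from $X$ by adding a single elementary cube, and then iterating. First I would observe that since $X \subset Y$ are cubical sets with finitely many elementary cubes in the difference (we work with bounded sets, or at least the relevant chain groups differ in finitely many generators), we can enumerate the cubes of $Y$ not in $X$ and build an increasing chain
\begin{equation*}
X \;=\; X_0 \;\subset\; X_1 \;\subset\; \cdots \;\subset\; X_m \;=\; Y \;,
\end{equation*}
where each $X_{i+1}$ is obtained from $X_i$ by adjoining exactly one elementary cube, so that $\#X_{i+1} - \#X_i = 1$ and $m = \#Y - \#X$. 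By the triangle inequality, it suffices to show that adding a single elementary cube changes any Betti number by at most one; summing these unit bounds over the $m$ steps then yields the claim. One subtlety to address here is that $Y$ need not be bounded, but the difference $\K^d(Y) \setminus \K^d(X)$ is what controls $\#Y - \#X$, and if this is infinite the inequality is trivial, so we may assume it is finite and the filtration above is well-defined.

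The heart of the argument is the single-cube step. Suppose $X' = X \cup Q$ for a single elementary cube $Q$ of dimension $k$, where we may assume $Q \notin \K^d(X)$. The key point is that passing from the chain complex $C_\bullet^d(X)$ to $C_\bullet^d(X')$ adds exactly one generator $\widehat Q$ to the single module $C_k^d(X')$, leaving all other chain groups unchanged, i.e. $C_k^d(X') = C_k^d(X) \oplus \Z\widehat Q$ and $C_j^d(X') = C_j^d(X)$ for $j \neq k$. I would then analyze how adding this one generator affects the ranks of the cycle and boundary groups. The standard dimension-counting argument shows that a single new generator in degree $k$ can only cause two possible effects on homology: either it creates a new $k$-cycle (increasing $\b_k$ by one) or it kills an existing $(k-1)$-cycle by becoming a boundary (decreasing $\b_{k-1}$ by one), but not both, and no other Betti number changes. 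Consequently $|\b_q(X') - \b_q(X)| \le 1$ for every $q$.

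To make this rank bookkeeping precise I would compare $Z_q$, $B_q$ and hence $\b_q = \rank Z_q - \rank B_q$ across the two complexes, using that $\partial_{k}^{X'}$ restricted to $C_k^d(X)$ agrees with $\partial_k^X$ (Proposition \ref{1}(2)). Adding one generator to $C_k^d$ changes $\rank \ker \partial_k$ by either $0$ or $+1$ and correspondingly changes $\rank \mathrm{im}\,\partial_k$ by either $+1$ or $0$, with the two cases being mutually exclusive; this pins down the effect on $\b_k$ and $\b_{k-1}$ and leaves all $\b_j$ with $j \neq k, k-1$ untouched. The main obstacle, and the step deserving the most care, is precisely this exclusivity and the careful tracking of how a single rank-one enlargement of one chain module propagates through the kernel–image bookkeeping; everything else is routine telescoping via the triangle inequality. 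An alternative that sidesteps some of this is to invoke the rank–nullity relation together with the fact that the Euler characteristic and the total rank of the chain complex change in a controlled way, but the direct kernel–image comparison is the cleanest route and is what I would write out in full.
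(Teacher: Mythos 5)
Your proposal is correct and follows essentially the same route as the paper's own proof: reduce to the case $\#Y-\#X=1$ by telescoping along a chain of single-cube additions, then bound the change in $\b_q$ by rank bookkeeping on $Z_q$ and $B_q$. The paper's single-cube step is slightly more economical (it only notes that each of $\rank Z_q(Y)-\rank Z_q(X)$ and $\rank B_q(Y)-\rank B_q(X)$ lies in $\{0,1\}$, so their difference is at most $1$ in absolute value), whereas you additionally establish the create-or-kill dichotomy via rank--nullity, which is true but not needed for the inequality.
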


\begin{proof}
Without loss of generality, we may assume that $\#Y$ is finite.
Assume first that $\# Y - \# X = 1$.
From the definition of the $q$-th Betti number $\b_q(\cdot)$, we have
\begin{equation}\label{15}
\b_q(Y) - \b_q(X) \;=\; \Big (\rank Z_q(Y) - \rank Z_q (X) \Big ) - \Big ( \rank B_q(Y) - \rank B_q(X) \Big) \;.
\end{equation}
Since $\# Y - \# X = 1$, both differences in each of braces on the right hand side of \eqref{15}
are equal to $0$ or $1$. Therefore the conclusion of the lemma holds if $\# Y - \# X = 1$.

For general cubical sets $X$ and $Y$ with $X \subset Y$,
let $X=\cup_{i=1}^m Q_i$ and $Y=\cup_{i=1}^l Q_i$,
where $m=\# X$, $l=\# Y$ and $Q_i$ is an elementary cube in $\R^d$ for each $1 \le i \le l$.
Then it follows from the first part of the proof that
\begin{align*}
|\b_q(Y) - \b_q(X)|
& \;\le\; \sum_{i=m+1}^l
|\b_q(X\cup Q_{m}\cup\cdots\cup Q_i) - \b_q(X\cup Q_{m}\cup\cdots\cup Q_{i-1})| \\
& \;\le\; \sum_{i=m+1}^l  1 \;=\; \# Y - \# X \;,
\end{align*}
which completes the proof of Lemma \ref{2}.
\end{proof}

\begin{lemma}\label{3}
Let $X$ and $Y$ be cubical sets with $X\subset Y$.
Assume that there exists a cubical set $Z=\cup_{i=1}^m Q_i$
such that $Y\setminus X\subset Z$ and $\dim Q_i = d$ for any $i=1,\cdots, m$.
Then
\begin{equation*}
\#Y - \#X \;\le\; 3^d|Z| \;.
\end{equation*}
\end{lemma}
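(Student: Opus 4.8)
The plan is to count $\#Y-\#X$ directly as the number of elementary cubes of $Y$ that fail to be cubes of $X$, and then to show that every such cube is a face of one of the top-dimensional cubes $Q_i$ generating $Z$. Since $X\subset Y$, every elementary cube of $X$ is an elementary cube of $Y$, so $\#Y-\#X=\#\{Q\in\K^d:Q\subset Y,\ Q\not\subset X\}$. It therefore suffices to bound the cardinality of this set by $3^d|Z|$.

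The main step is the claim that every elementary cube $Q$ with $Q\subset Y$ and $Q\not\subset X$ satisfies $Q\subset Q_i$ for some $i$. I would argue as follows. A cubical set is a locally finite union of closed elementary cubes, hence a closed subset of $\R^d$, and any elementary cube $Q$ equals the closure of its relative interior; consequently $Q\not\subset X$ forces the existence of a point $p$ in the relative interior of $Q$ with $p\notin X$. Since $p\in Q\subset Y$, we get $p\in Y\setminus X\subset Z$, so $p\in Q_i$ for some $i$. Writing $Q=I_1\times\cdots\times I_d$ and $Q_i=[a_1,a_1+1]\times\cdots\times[a_d,a_d+1]$ and comparing coordinates, the membership $p\in Q_i$ together with the fact that $p$ is a relative interior point of $Q$ (so its $j$-th coordinate lies strictly inside $[l_j,l_j+1]$ whenever $I_j=[l_j,l_j+1]$ is nondegenerate) forces $a_j=l_j$ in every nondegenerate coordinate and $I_j\subset[a_j,a_j+1]$ in every degenerate coordinate. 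Thus $Q\subset Q_i$, i.e.\ $Q$ is a face of $Q_i$.

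Granting this, the counting is routine. Each $d$-dimensional elementary cube $Q_i$ contains exactly $3^d$ elementary cubes, since in each of the $d$ coordinates an elementary subinterval of $[a_j,a_j+1]$ is one of $[a_j]$, $[a_j+1]$, $[a_j,a_j+1]$. The distinct top-dimensional cubes appearing among the $Q_i$ number exactly $|Z|$, because each has unit Lebesgue measure and two distinct ones overlap only on a set of measure zero. Every cube counted in the first step is a face of one of these $d$-cubes, so the total number of such cubes is at most $3^d|Z|$, which is the desired inequality.

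The delicate point, and the only place where more than bookkeeping is needed, is the inclusion $Q\subset Q_i$. Choosing an arbitrary point of $Q\cap(Y\setminus X)$ would not be enough: a point on the boundary of $Q$ can lie in a cube $Q_i$ that does not contain $Q$, for instance when the shared point is an endpoint of a nondegenerate interval of $Q$, in which case $Q_i$ could extend in the opposite direction. This is why I insist on a relative interior point, which is available precisely because $X$ is closed; the strict coordinate inequalities it supplies are exactly what pin down $a_j=l_j$ and thereby force $Q\subset Q_i$.
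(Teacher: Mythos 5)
Your proof is correct and takes essentially the same route as the paper's: both reduce the bound to counting the elementary cubes of $Y$ not contained in $X$, observe that every such cube must lie inside $Z$, and then count at most $3^d$ elementary cubes per unit cube of $Z$, with $|Z|$ equal to the number of distinct $Q_i$. The only difference is that you rigorously justify the key containment via the relative-interior/closedness argument (forcing $Q\subset Q_i$ for a single $i$), a step the paper merely asserts when it claims that any $Q\subset Y$ with $Q\not\subset Y\cap Z$ must be a subset of $X$; your write-up is, if anything, more complete on this point.
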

\begin{proof}
Note that $\#Y-\#X$ can be written as
\begin{align*}
\#Y-\#X
&\;=\; \sum_{Q\in\K^d: Q\subset Y\cap Z} 1 + \sum_{Q\in\K^d: Q\subset Y, Q\not\subset Y\cap Z} 1  \\
&\;-\; \sum_{Q\in\K^d: Q\subset X\cap Z} 1 -  \sum_{Q\in\K^d: Q\subset X, Q\not\subset X\cap Z} 1 \;.
\end{align*}
From the assumption $Y\setminus X\subset Z$, any elementary cube $Q$ with
$Q\subset Y$ and $Q\not\subset Y\cap Z$ must be a subset of $X$.
Therefore second and fourth sums cancel each other.
Hence we have
\begin{align}\label{23}
\#Y-\#X \;\le\; \sum_{Q\in\K^d: Q\subset Z} 1\;.
\end{align}
Let $\binom dk$ be the binomial coefficient.
Since the number of $k$-dimensional elementary cubes in a unit cube $[0,1]^d$
is equal to $\binom dk 2^{d-k}$ for any $0\le k\le d$, the number of elementary cubes in $[0,1]^d$ is equal to $3^d$.
This fact together with \eqref{23} completes the proof of Lemma \ref{3}.
\end{proof}

We now turn to the proof of Theorem \ref{lln}.

\begin{proof}[Proof of Theorem \ref{lln}]
Let $\widehat\b_q(t)$ be the limit supremum of the sequence
$\{ |\La_n|^{-1}E[\b_q^n(t)]:n\in\N\}$:
\begin{equation}\label{19}
\widehat\b_q(t) \;:=\; \varlimsup_{n\to\infty} \dfrac{1}{|\La_n|} E[\b_q^n(t)] \;.
\end{equation}
It follows from Lemmas \ref{2} and \ref{3} that
\begin{align}\label{22}
\dfrac{1}{|\La_n|}\b_q^n(t) \;\le\; \dfrac{1}{|\La_n|}\# X^n(t)
\;\le\; \dfrac{3^d}{|\La_n|}|\La_n| \;=\; 3^d \;.
\end{align}
Therefore $\widehat\b_q(t)$ is finite.
We hereafter show that
\begin{equation}\label{4}
\lim_{n\to\infty} \dfrac{1}{|\La_n|} \b_q^n(t) \;=\; \widehat\b_q(t) \;,
\end{equation}
almost surely.

Fix a positive integer $K$ and take an integer $m$, which depends on $K$ and $n$, 
satisfying the inequalities
\begin{equation}\label{5}
(K+1)m \;\le\; n \;<\; (K+1)(m+1) \;.
\end{equation}
From \eqref{5}, one can easily see that
\begin{equation}\label{6}
|\La_K|m^d \;\le\; |\La_n| \;\le\; |\La_K|m^d + 6dm|\La_n|^{1-1/d} \;.
\end{equation}

Let $\mathbb{I}_m:=\{1, \cdots, m \}^d$.
For a multiindex $i=(i_1,\cdots, i_d)$ in $\mathbb{I}_m$,
let $z^i=(z^i_1,\cdots, z_d^i)$ be the point in $\Z^d$ with
$z^i_j=-n+(K+1)(2i_j-1)$ and let $\La_K^i=z^i+\La_K$.
Define the cubical set $X^{m,K}(t)$ by
\begin{equation*}
X^{m,K}(t) \;=\; \bigsqcup_{i\in \mathbb{I}_m} \{Q\in\K^d: \om_Q \le t, Q\subset \La_K^i \} \;, 
\end{equation*}
and denote by $\b_q^{m,K}(t)$ the $q$-th Betti number of $X^{m,K}(t)$.
From the triangle inequality, $| |\La_n|^{-1}\b_q^n(t) - \widehat\b_q(t) |$
is bounded above by
\begin{equation}\label{7}
\begin{split}
| \dfrac{1}{|\La_n|}\b_q^n(t) - \dfrac{1}{|\La_K|m^d}\b_q^n(t) | 
& \;+\;  | \dfrac{1}{|\La_K|m^d}\b_q^n(t) - \dfrac{1}{|\La_K|m^d}\b_q^{m,K}(t) | \\
& \;+\;  | \dfrac{1}{|\La_K|m^d}\b_q^{m,K}(t) - \widehat\b_q(t) | \;.
\end{split}
\end{equation}

In view of \eqref{22} and \eqref{6}, the first term of \eqref{7} vanishes as $n\to\infty$ and $K\to\infty$.
On the one hand, from Lemmas \ref{2} and \ref{3}, the second term of  \eqref{7} can be bounded above by
\begin{equation}\label{8}
\dfrac{1}{|\La_K|m^d}\{\# X^n(t) - \# X^{m,K}(t) \}
\;\le\; \dfrac{3^d}{|\La_K|m^d}|\La_n \setminus (\bigsqcup_{i\in \mathbb{I}_m} \La_K^i)| \;.
\end{equation}
From the latter inequality of \eqref{6}, the right hand side in \eqref{8} vanishes as $n\to\infty$ and $K\to\infty$.
Since subsets $\{\La_K^i:i\in \mathbb I_m\}$  are mutually disjoint, 
$\b_q^{m,K}(t)$ can be written as
\begin{equation*}
\b_q^{m,K}(t) \;=\; \sum_{i\in \mathbb I_m} \b_q(\La_K^i) \;.
\end{equation*}
Therefore it follows from the multivariate ergodic theorem \cite[Proposition 2.2]{mr} that
\begin{align*}
\lim_{m\to\infty}\dfrac{1}{m^d}\b_q^{m,K}(t) \;=\; E[\b_q^K(t)] \;,
\end{align*}
almost surely. Hence
\begin{align*}
\varlimsup_{K\to\infty}\varlimsup_{n\to\infty}
| \dfrac{1}{|\La_K|m^d}\b_q^{m,K}(t) - \widehat\b_q(t) |
\;=\; \varlimsup_{K\to\infty} | \dfrac{1}{|\La_K|}E[\b_q^K(t)] - \widehat\b_q(t) | \;=\; 0 \;,
\end{align*}
by choosing appropriate subsequence in $K$ if necessary. 

These limits prove \eqref{4}, and therefore the proof of Theorem \ref{lln} is completed.
\end{proof}

Before proceeding the proof of Theorem \ref{ulln},
we prove the uniform convergence of the scaled expectation
of the Betti numbers $\{|\La_n|^{-1}E[\b_q^n(\cdot)]:n\in\N\}$.

\begin{lemma}\label{9}
Assume that the marginal distribution function
$F^Q(t)=P(\om_Q \le t)$ is continuous in $t\in[0,1]$ for any $Q\in\K^d$.
Then the sequence of functions $\{|\La_n|^{-1}E[\b_q^n(\cdot)]:n\in\N\}$
uniformly converges to $\widehat\b_q(\cdot)$.
\end{lemma}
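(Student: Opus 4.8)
The plan is to establish the uniform convergence of $\{|\La_n|^{-1}E[\b_q^n(\cdot)]\}$ to $\widehat\b_q(\cdot)$ by combining the already-established pointwise convergence (from Theorem \ref{lln}) with an equicontinuity argument that the continuity assumption on $F^Q$ provides. The general principle I would invoke is the following: if a sequence of monotone (or uniformly equicontinuous) functions on the compact interval $[0,1]$ converges pointwise to a continuous limit, then the convergence is uniform. Indeed, each scaled expectation $|\La_n|^{-1}E[\b_q^n(t)]$ is monotone... well, not quite, so the cleaner route is to prove that the family $\{|\La_n|^{-1}E[\b_q^n(\cdot)]\}_n$ is \emph{uniformly equicontinuous} and that the limit $\widehat\b_q(\cdot)$ is continuous, and then appeal to a standard Arzel\`a--Ascoli-type argument (a uniformly equicontinuous sequence converging pointwise on a compact set converges uniformly).

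First I would control the modulus of continuity of $E[\b_q^n(\cdot)]$. For $s \le t$ we have $X^n(s) \subset X^n(t)$, so by Lemma \ref{2},
\begin{equation*}
|\b_q^n(t) - \b_q^n(s)| \;\le\; \# X^n(t) - \# X^n(s) \;=\; \#\{Q\in\K^d : Q\subset\La_n,\ s < \om_Q \le t\} \;.
\end{equation*}
Taking expectations and using stationarity, the right-hand side becomes $\sum_{Q} P(s < \om_Q \le t) = \sum_Q \big(F^Q(t)-F^Q(s)\big)$, where the sum ranges over elementary cubes contained in $\La_n$. The key point is that by stationarity the distribution function $F^Q$ depends only on the translation-equivalence class of $Q$, of which there are finitely many (at most $3^d$, one for each elementary cube in the unit cube), so the collection $\{F^Q\}$ is a finite family of distribution functions. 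Each being continuous on the compact interval $[0,1]$ is uniformly continuous, and a finite family of uniformly continuous functions is uniformly equicontinuous. Writing $\omega(\de) = \max_Q \sup_{|t-s|\le\de}|F^Q(t)-F^Q(s)|$ (a common modulus over the finitely many classes), and noting that $\#\{Q\subset\La_n\} \le 3^d|\La_n|$, I obtain
\begin{equation*}
\frac{1}{|\La_n|}\,\big|E[\b_q^n(t)] - E[\b_q^n(s)]\big| \;\le\; 3^d\,\omega(|t-s|) \;,
\end{equation*}
uniformly in $n$. Since $\omega(\de)\to 0$ as $\de\to 0$, this is exactly uniform equicontinuity of the scaled expectations.

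Next, passing $n\to\infty$ in the displayed inequality and using the pointwise convergence from Theorem \ref{lln} (applied at both $s$ and $t$) shows that the limit satisfies $|\widehat\b_q(t)-\widehat\b_q(s)| \le 3^d\,\omega(|t-s|)$, so $\widehat\b_q(\cdot)$ is itself (uniformly) continuous. With a uniformly equicontinuous sequence converging pointwise to a continuous function on the compact set $[0,1]$, uniform convergence follows by a routine $\e/3$ argument: cover $[0,1]$ by finitely many points $t_1,\dots,t_N$ such that every $t$ lies within $\de$ of some $t_j$ (with $\de$ chosen so $3^d\omega(\de)<\e/3$), use equicontinuity to transfer from $t$ to $t_j$ for both $|\La_n|^{-1}E[\b_q^n]$ and $\widehat\b_q$, and use pointwise convergence at the finitely many $t_j$ to handle the middle term for $n$ large.

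The main obstacle, and the step deserving the most care, is verifying that stationarity genuinely reduces $\{F^Q : Q\in\K^d\}$ to a \emph{finite} family so that uniform continuity of each $F^Q$ upgrades to a single common modulus $\omega$. If one instead had infinitely many distinct marginal distributions, continuity of each alone would not yield equicontinuity. Everything else is routine: Lemma \ref{2} supplies the Lipschitz-type bound in terms of cube counts, the counting bound $3^d|\La_n|$ comes from the argument already used for Lemma \ref{3}, and the final Arzel\`a--Ascoli-style deduction is standard real analysis.
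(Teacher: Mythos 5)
Your proposal is correct and follows essentially the same route as the paper's proof: pointwise convergence from Theorem \ref{lln} (with the bound \eqref{22} and dominated convergence), equicontinuity of $\{|\La_n|^{-1}E[\b_q^n(\cdot)]:n\in\N\}$ obtained from Lemma \ref{2}, the counting bound $3^d|\La_n|$, and the finiteness of the family of marginal distribution functions, concluded by an Arzel\`a--Ascoli-type argument. The only difference is cosmetic and in your favor: you group cubes by translation class (finitely many, at most $3^d$), whereas the paper groups them by dimension $k$, which is slightly imprecise since stationarity under $\Z^d$-translations identifies only the marginals of translates, not of all cubes of equal dimension.
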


\begin{proof}
From the dominated convergence theorem, Theorem \ref{lln} and \eqref{22},
the sequence of functions $\{|\La_n|^{-1}E[\b_q^n(\cdot)]:n\in\N\}$ pointwisely
converges to $\widehat\b_q(\cdot)$. 
We shall show below that  $\{|\La_n|^{-1}E[\b_q^n(\cdot)]:n\in\N\}$ is an equicontinuous sequence.
Therefore the conclusion of Lemma \ref{9} follows from the Ascoli-Arzel\`a theorem.

For each $0 \le k \le d$, denote by $F_k$ the distribution function $F^Q$
with some $Q\in\K_k^d$. Note that from the stationarity of $P$
this definition does not depend on a choice of $Q\in\K_k^d$.
Let $0 \le s \le t \le 1$. From Lemma \ref{2}, it holds that
\begin{equation*}
\dfrac{1}{|\La_n|}| \b_q^n(t) - \b_q^n(s) |
\;\le\; \dfrac{1}{|\La_n|}\sum_{Q\in\K^d: Q\subset \La_n} {\bf 1}\{ s <  t_Q \le t \} \;.
\end{equation*}
Since $E[{\bf 1}\{ s <  \om_Q \le t \}] = F_k(t)-F_k(s)$ for each $Q\in\K_k^d$
and the number of elementary cubes contained in $\La_n$ is less than or equal to $3^d|\La_n|$,
\begin{align}\label{16}
|\dfrac{1}{|\La_n|} E[\b_q^n(t)] - \dfrac{1}{|\La_n|} E[\b_q^n(s)] |
\;\le\; 3^d \sum_{k=0}^d \Big(F_k(t) - F_k(s)\Big) \;.
\end{align}
Since the right hand side of \eqref{16} does not depend on $n\in\N$
and vanishes as $t-s\downarrow0$ from continuity of the marginal distribution functions,
we obtain the desired equicontinuity.
\end{proof}

\begin{remark}\label{12}
{\rm 
It follows from the proof of Lemma \ref{9} that
$\widehat\b_q$ is uniformly continuous on $[0,1]$. 
}\end{remark}

\begin{proof}[Proof of Theorem \ref{ulln}]
Let $m$ be a positive integer and set
$\lfloor t \rfloor_m=\lfloor tm \rfloor / m$ for $0\le t \le 1$,
where $\lfloor \cdot \rfloor$ stands for the floor function.
Then the inside of the limit on the left hand side of \eqref{10} can be bounded above by
\begin{equation}\label{11}
\begin{split}
\dfrac{1}{|\La_n|}\sup_{0\le t \le 1} | \b_q^n(t) - \b_q^n(\lfloor t \rfloor_m) | 
& \;+\; \sup_{0 \le t \le 1} | \dfrac{1}{|\La_n|}\b_q^n(\lfloor t \rfloor_m) - \widehat\b_q(\tm) | \\
& \;+\; \sup_{0 \le t \le 1} | \widehat\b_q(\tm) - \widehat\b_q(t) | \;.
\end{split}
\end{equation}

The second supremum of \eqref{11} can be rewritten as
\begin{equation*}
\max_{i=0,\cdots, m} | \dfrac{1}{|\La_n|}\b_q^n(\frac{i}{m}) -\widehat\b_q(\frac{i}{m}) | \;.
\end{equation*}
Therefore the last expression almost surely converges to $0$ as $n\to\infty$ for any fixed $m\in\N$.
It also follows from Remark \ref{12} that the third supremum of \eqref{11}  vanishes as $m\to\infty$.
Hence to conclude the proof it is enough to show that the first supremum of \eqref{11}
vanishes as $n\to\infty$ and $m\to\infty$.

From Lemma \ref{2}, the first supremum of \eqref{11} is bounded above by
\begin{equation*}
\sup_{0 \le t \le 1} \dfrac{1}{|\La_n|} \sum_{Q\in\K^d: Q\subset \La_n} {\bf 1}\{t-1/m \le \om_Q \le t \} \;.
\end{equation*}
This last expression is also bounded above by
\begin{equation}\label{13}
\max_{i=0, \cdots, m-1} \dfrac{1}{|\La_n|} \sum_{Q\in\K^d: Q\subset \La_n} {\bf 1}\{i/m \le \om_Q \le (i+2)/m \} \;.
\end{equation}
From the multivariate ergodic theorem \cite[Proposition 2.2]{mr},
the limit supremum in $n\in\N$ of the expression \eqref{13} is almost surely bounded above by
\begin{equation*}
C \max_{i=0, \cdots, m-1} \sum_{k=0}^d 
\Big( F_k(\min{(1,(i+2)/m)}) - F _k(i/m) \Big) \;.
\end{equation*}
for some constant $C>0$, depending only on $d$.
Therefore the first supremum of \eqref{11} vanishes as $n\to\infty$ and $m\to\infty$,
and it completes the proof of Theorem \ref{ulln}.
\end{proof}

In the rest of this section, we give some sufficient condition which ensures positivity
of the limit $\widehat\b_q(t)$ that appeared in Theorem \ref{lln}.
We start from giving notation needed for describing its sufficient condition.

For $x\in\Z^d$ and $K\in\N$, denote by $\mathcal{L}_{x,K}$ the subset of $\K^d$
given by $\{Q\in\K^d: Q\subset x+\La_{K}\}$.
For a finite subset $\mathcal{L}\subset\K^d$ with $\mathcal{L}_{x,K}\subset\mathcal{L}$, define
random cubical sets $X_\L(t)$ and $X_{\L}^{x,K}(t)$ by
\begin{align*}
X_\L(t) &\;:=\; \bigcup\{Q\in\L:\om_Q\le t \} \;, \\
X_\L^{x,K}(t) &\;:=\; \bigcup\{Q\in\L\setminus\L_{x,K}:\om_Q\le t \} \;.
\end{align*}
Let $\Om_q(x,K,t)\subset\Om$ be the set of all configurations satisfying the inequality
\begin{equation}\label{20}
\b_q(X_\L(t)) \;\ge\; 1+\b_q(X_\L^{x,K}(t)) \;,
\end{equation}
for any finite subset $\L\subset\K^d$ with $\L_{x,K}\subset\L$.

The following proposition asserts that $\widehat\b_q(t)$ is positive if
the event $\Om_q(0,K,t)$ occurs for some $K\in\N$ with positive probability.

\begin{proposition}\label{21}
Fix  integers $0\le q<d$ and $t\in[0,1]$. If there exists a positive integer $K$ with $P(\Om_q(0,K,t))>0$,
then $\widehat\b_q(t)>0$.
\end{proposition}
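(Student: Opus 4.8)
The plan is to show that the event $\Om_q(0,K,t)$, when it has positive probability, forces a uniformly positive density of independent ``local contributions'' to the Betti number, which survives in the ergodic limit. The key structural observation is that inequality \eqref{20} is a statement about a \emph{local} configuration: adding the cubes inside $\La_K$ (with value $\le t$) to any larger configuration increases $\b_q$ by at least one. By the translation invariance built into the definitions, if $P(\Om_q(0,K,t))>0$ then $P(\Om_q(x,K,t)) = P(\Om_q(0,K,t)) =: \rho > 0$ for every $x\in\Z^d$, since $\Om_q(x,K,t) = \t_x \Om_q(0,K,t)$ and $P$ is stationary.

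The main step is to bound $\b_q^n(t)$ from below by a sum of indicator functions of these local events over a well-separated grid of translates. First I would fix the scale $K$ from the hypothesis and, mimicking the tiling argument in the proof of Theorem \ref{lln}, partition $\La_n$ into roughly $m^d$ translated boxes $\La_K^i = z^i + \La_K$ with $i\in\mathbb I_m$, chosen so that the boxes $z^i + \La_{2K}$ (say) are pairwise disjoint; this separation guarantees that the local modification inside one box does not interact with the cubes used by another. The crucial claim is the superadditivity estimate
\begin{equation*}
\b_q^n(t) \;\ge\; \sum_{i\in\mathbb I_m} {\bf 1}_{\Om_q(z^i,K,t)} \;,
\end{equation*}
which should follow by iterating \eqref{20}: apply the defining inequality successively, peeling off one box at a time, with $\L$ taken to be the cubes of $X^n(t)$ together with the boxes not yet removed. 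Because $Q\subset\La_K^i$ in the definition of $X_\L^{x,K}$ and the boxes are separated, each application contributes an independent $+1$ whenever the corresponding local event holds, and the removals do not disturb later boxes.

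Given this pointwise lower bound, taking expectations and dividing by $|\La_n|$ yields
\begin{equation*}
\dfrac{1}{|\La_n|} E[\b_q^n(t)] \;\ge\; \dfrac{1}{|\La_n|} \sum_{i\in\mathbb I_m} P(\Om_q(z^i,K,t)) \;=\; \dfrac{m^d \rho}{|\La_n|} \;.
\end{equation*}
Since the tiling gives $|\La_n| \le |\La_{2K}| m^d + o(m^d)$ (analogous to \eqref{6}), the ratio $m^d/|\La_n|$ stays bounded below by a positive constant as $n\to\infty$, so the limit supremum defining $\widehat\b_q(t)$ in \eqref{19} is at least $\rho/|\La_{2K}| > 0$. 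This establishes $\widehat\b_q(t)>0$.

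The step I expect to be the main obstacle is the rigorous justification of the superadditive lower bound, i.e.\ that the local increments from \eqref{20} genuinely \emph{add up} rather than merely each holding in isolation. The definition of $\Om_q(x,K,t)$ is quantified over \emph{all} finite $\L\supset\L_{x,K}$, which is exactly what makes the peeling argument work: at each stage one instantiates $\L$ as the currently remaining collection of cubes, and the disjointness of the boxes ensures $\L_{z^i,K}$ is untouched by previous removals. I would need to verify carefully that the cubical set obtained after removing all boxes except those indexed by the processed prefix is indeed of the form $X_\L^{z^i,K}(t)$ for the appropriate $\L$, and that the separation between boxes (rather than mere disjointness of the $\La_K^i$) is what prevents a removed $d$-cube's lower-dimensional faces from lying in a neighboring box. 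Choosing the separation generously (e.g.\ spacing the box centers by more than $2K+2$ in each coordinate) should remove any such boundary-sharing issue.
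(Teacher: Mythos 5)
Your proposal is correct and takes essentially the same route as the paper's proof: tile $\La_n$ by disjoint translates $z^i+\La_K$, iterate \eqref{20} over the boxes on which the local event occurs (instantiating $\L$ as the currently remaining collection of cubes each time) to obtain $\b_q^n(t)\ge\sum_{i\in\mathbb{I}_m}{\bf 1}\{\om\in\Om_q(z^i,K,t)\}$, then take expectations, invoke stationarity, and bound the limit supremum in \eqref{19} from below by a positive constant. One minor remark: the generous separation you ask for is unnecessary, since mere disjointness of the closed boxes $z^i+\La_K$ already makes the cube collections $\L_{z^i,K}$ pairwise disjoint, and the quantification over \emph{all} finite $\L\supset\L_{x,K}$ in the definition of $\Om_q(x,K,t)$ absorbs any interaction with cubes outside the box being peeled.
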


\begin{proof}
Set $n=(K+1)m$ with $m\in\N$. Let $\mathbb{I}_m:=\{1, \cdots, m \}^d$.
For a multiindex $i=(i_1,\cdots, i_d)$ in $\mathbb{I}_m$,
let $z^i$ be the point in $\Z^d$ $z^i=(z^i_1,\cdots, z_d^i)$ with $z^i_j=-n+(K+1)(2i_j-1)$.

Since subsets $\{z^i+\La_K: i\in \mathbb{I}_m\}$ of $\R^d$ are mutually disjoint,
therefore by using \eqref{20} repeatedly we have
\begin{equation*}
\dfrac{1}{|\La_n|}\b_q(X^n(t)) \;\ge\; \dfrac{1}{|\La_n|}\sum_{i\in\mathbb{I}_m} {\bf 1} \{\om\in\Om_q(z^i,K,t)\} \;.
\end{equation*}
From the stationarity of $P$, by taking the expectation we also have
\begin{equation}\label{24}
\dfrac{1}{|\La_n|}E[\b_q(X^n(t))] \;\ge\; \dfrac{1}{(2K+2)^d}P(\Om_q(0,K,t)) \;.
\end{equation}
Therefore, from \eqref{19}, letting $n\to\infty$ in \eqref{24} gives us
\begin{equation*}
\widehat\b_q(t) \;\ge\; \dfrac{1}{(2K+2)^d}P(\Om_q(0,K,t)) \;>\;  0 \;,
\end{equation*}
and it completes the proof of Proposition \ref{21}.
\end{proof}

\section{Central limit theorem} \label{sec:CLT}

We prove in this section Theorems \ref{clt} and \ref{cltforlts}.
The proof of the CLT for Betti numbers relies on
a general method developed by Penrose \cite{p}.
We first state it in our situation for the sake of completeness.

We first re-parametrize configurations by $x\in\Z^d$ instead of $\K^d$
as follows. Let $\mathcal{N}^d$ be the set of all elementary cubes in $\R^d$
whose left most point is equal to the origin $O$, that is,
\begin{equation*}
\mathcal{N}^d \;=\; \{ N \in\K^d: \min_{a\in I_i(N)} a = 0 \} \;.
\end{equation*}
Note that the cardinality of $\mathcal{N}^d$ is equal to $2^d$.
We identify  $\{\om_Q: Q\in \K^d\}$ with $\{\om_x=(\om_{x,N} :N\in\mathcal{N}^d): x\in\Z^d\}$,
through the unique decomposition $Q=x+N$, $x\in\Z^d$ and $N\in\mathcal{N}^d$.
Thus the random cubical set $X(t)$ can be defined from a random element $\{\om_x:x\in\Z^d\}$.

Let $(\om^*_{0,N}:N\in\mathcal N_q^d)$ be an independent copy of $\om_0$
and set
\begin{align*}
\om^*_x \;=\;
\begin{cases}
(\om^*_{0,N}:N\in\mathcal N^d) \;, \qquad & \text{if $x=0$ } \;, \\
(\om_{x,N}:N\in\mathcal N^d) \;, \qquad & \text{otherwise } \;.
\end{cases}
\end{align*}
We denote by $X^*(t)$ and $X^{*,n}(t)$ the random cubical sets obtained from
random variables $\{\om^*_x=(\om^*_{x,N} :N\in\mathcal{N}^d): x\in\Z^d\}$
in a similar manner as we defined for $X(t)$ and $X^n(t)$.

Let $\B$ be the collection of all subsets $B$ of $\Z^d$
such that $B=(x+\La_n)\cap \Z^d$ for some point $x\in\Z^d$ and $n\in\N$.
Denote by $\mathcal F_O$ the $\si$-field generated by
$\{ \om_x : x \preceq O\}$, where $ x \preceq y$ means $x$ precedes or equals
$y$ in the lexicographic ordering on $\Z^d$.
For a family of real-valued random variables $(H(\om;B), B\in\B)$,
define $(D_OH)(B)$, which is the \lq\lq effect of changing $\om_0$\rq\rq,  as
\begin{equation*}
(D_OH)(B)\;=\;H(\om;B) - H(\om^*;B) \;.
\end{equation*}

The following result is obtained by Penrose \cite{p}.

\begin{theorem}\label{pclt}
Let $(H(\om;B), B\in\B)$ be a family of real-valued random variables indexed by $\B$,
satisfying the following conditions:
\begin{itemize}
\item Translation invariance:  $H(\tau_x\om;x+B)=H(\om;B)$ for all $x\in\Z^d$, $\om\in\Om$ and
$B\in\B$.
\item Stability: There exists a random variable $D_H(\infty)$
such that for any sequence $\{A_n:n\in\N\}$ in $\B$ with $\varliminf A_n =\Z^d$,
random variables $\{ (D_OH)(A_n):n\in\N\}$ converge in probability to $D_H(\infty)$ as $n\to\infty$.
\item Bounded moment condition: There exists a constant $\ga>2$ such that
\begin{equation*}
\sup_{B\in\B} E\Big[\big|(D_OH)(B)\big|^\ga\Big] \;<\; \infty \;.
\end{equation*}
\end{itemize}
Then, for any sequence $\{A_n:n\in\N\}$ in $\B$ with $\varliminf A_n =\Z^d$, as $n\to\infty$
\begin{equation*}
\dfrac{1}{|\La_n|} E\Big[\Big( H(\om;A_n) -  E[H(\om;A_n)]\Big)^2\Big] \;\to\; \si^2 \;,
\end{equation*}
and
\begin{equation*}
\dfrac{1}{|\La_n|^{1/2}} \Big(H(\om;A_n) - E[H(\om;A_n)]\Big) \Rightarrow \mathcal N(0,\si^2) \;,
\end{equation*}
with $\si^2=E[(E[D_H(\infty)|\mathcal{F}_O])^2]$.
\end{theorem}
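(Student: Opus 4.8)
The plan is to prove this by the martingale-difference method that underlies central limit theorems for stabilizing functionals on i.i.d.\ lattice fields. Working in the product-measure setting in which the theorem is applied, the variables $\{\om_x:x\in\Z^d\}$ are independent, and I would resolve the centered functional $S_n:=H(\om;A_n)-E[H(\om;A_n)]$ into a sum of martingale increments obtained by revealing the configuration one site at a time along the lexicographic ordering $\preceq$. A suitable martingale CLT (of Lindeberg--Lyapunov type for triangular arrays of martingale differences) then yields both the variance asymptotic and the Gaussian limit, once its two hypotheses---convergence of the normalized conditional variance and a Lyapunov bound---are checked.

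Concretely, enumerate $A_n=\{x_1\preceq\cdots\preceq x_{|A_n|}\}$, let $\mathcal G_j:=\si(\om_{x_1},\dots,\om_{x_j})$, and set $\De_j:=E[S_n\mid\mathcal G_j]-E[S_n\mid\mathcal G_{j-1}]$, so that $S_n=\sum_{j=1}^{|A_n|}\De_j$ is a sum of martingale differences. The identity underpinning everything is that conditioning away $\om_{x_j}$ is the same as resampling it by an independent copy; hence
\begin{equation*}
\De_j \;=\; E\big[\,H(\om;A_n)-H(\om^{(x_j)};A_n)\;\big|\;\mathcal G_j\,\big]\;,
\end{equation*}
where $\om^{(x_j)}$ agrees with $\om$ except that $\om_{x_j}$ is resampled. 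Applying the translation invariance hypothesis with the shift $\tau_{-x_j}$, which carries the site $x_j$ to the origin, this inner difference equals $(D_OH)(\tau_{-x_j}\om;\,A_n-x_j)$. For every interior site $x_j$ the translated index set $A_n-x_j$ increases to $\Z^d$, so the stability hypothesis gives $(D_OH)(A_n-x_j)\to D_H(\infty)$.

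With this identification I would verify the two analytic inputs. For the conditional variance, I would show that $|A_n|^{-1}\sum_j E[\De_j^2\mid\mathcal G_{j-1}]$ converges in probability to $\si^2=E[(E[D_H(\infty)\mid\F_O])^2]$: the contribution of each interior site is, by stationarity, a translate of a single random variable whose conditional expectation given the past $\F_O$ is the stabilized $D_H(\infty)$, and the multivariate ergodic theorem \cite[Proposition 2.2]{mr}---already used in the proof of Theorem \ref{lln}---converts the spatial average into the stated expectation; the boundary sites form an $O(|A_n|^{1-1/d})$ fraction and are negligible. For the Lyapunov condition, the bounded moment hypothesis $\sup_{B\in\B}E[|(D_OH)(B)|^\ga]<\infty$ with $\ga>2$ passes through the conditional expectation to give $\sup_j E[|\De_j|^\ga]<\infty$, so that $|A_n|^{-\ga/2}\sum_j E[|\De_j|^\ga]\to0$, which is the required Lyapunov bound with exponent $\ga/2>1$.

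The main obstacle is the rigorous version of the translation identity for $\De_j$ together with the control of the conditional variance. One must justify that revealing sites in lexicographic order makes $\De_j$ exactly the $\F_O$-conditioned resampling cost seen from $x_j$, and---more delicately---that the random conditional variances concentrate at $\si^2$ despite their dependence on the ordering and on the growing window; this requires combining the $L^2$ stability of $(D_OH)$ with an $L^1$ ergodic average and showing that the $O(|A_n|^{1-1/d})$ boundary contribution to both the variance and the Lyapunov sum genuinely vanishes after normalization. These are precisely the steps carried out in full generality by Penrose \cite{p}, to which the authors defer.
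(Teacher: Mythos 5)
The paper gives no proof of this statement at all: it is quoted as a black-box result of Penrose \cite{p}, to which the authors defer. Your proposal correctly reconstructs the route Penrose himself takes---lexicographic martingale-difference decomposition, identification of the increments as resampling costs via translation invariance, convergence of the normalized conditional variances through stabilization plus an ergodic/LLN argument (with negligible boundary contribution), and a Lyapunov condition from the $\ga>2$ moment bound---so it is essentially the same approach as the proof the paper relies on, and your sketch honestly flags the genuinely delicate step (concentration of the conditional variances at $\si^2$) rather than papering over it.
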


We  turn to proving the stabilization property for the Betti number.
The following proposition asserts that the Betti number for any cubical sets stabilizes
in the deterministic setting.

\begin{proposition}\label{sta}
Let $X$ and $Y$ be cubical sets in $\R^d$ such that the symmetric difference $X\triangle Y$ forms a bounded set.
Then there exists a constant $\De_\infty\in\Z$ such that,
for any sequence $\{A_n:n\in\N\}$ in $\B$ with $\varliminf A_n =\Z^d$,
there exists $n_\infty\in\N$ such that
\begin{equation*}
\b_q(X \cap A_n) - \b_q(Y \cap A_n) \;=\;  \De_\infty \;,
\end{equation*}
for any $n\ge n_\infty$.
\end{proposition}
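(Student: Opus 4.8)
The plan is to pass from $Y\cap A_n$ to $X\cap A_n$ by a fixed finite sequence of single-cube insertions and deletions, all confined to a bounded box, and to argue that the effect of each such move on $\b_q$ becomes independent of $n$ once $A_n$ is large. Throughout I would use that a union of elementary cubes automatically contains every face of each of its cubes, so that each $X\cap A_n$ and $Y\cap A_n$ is a genuine finite cubical complex, and that $\b_q(\cdot)=\dim_\Q H_q(\cdot;\Q)$, which lets me compute with field coefficients and track ranks by the bookkeeping in \eqref{15}.

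First I would reduce to a bounded set of differing cubes. Since $X\triangle Y$ is bounded, only finitely many elementary cubes lie in one of $X,Y$ but not the other; write $A$ for the cubes of $X$ not in $Y$ and $B$ for the cubes of $Y$ not in $X$, and fix a box $\La_M$ (up to translation) containing all of them. Because $\varliminf A_n=\Z^d$, for all large $n$ the region indexed by $A_n$ contains $\La_M$; hence the cube sets of $U_n:=X\cap A_n$ and $V_n:=Y\cap A_n$ differ in exactly $A$ and $B$, and these finitely many cubes sit in the interior, independently of $n$.

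Next I would exhibit the transformation $V_n\rightsquigarrow U_n$. Adding the cubes of $A$ in order of increasing dimension keeps every intermediate set a complex, since each proper face of a cube in $A$ already lies in $Y$ or is a lower-dimensional cube of $A$ added earlier; deleting the cubes of $B$ afterwards in order of decreasing dimension also keeps a complex, since any cube strictly containing a given $Q\in B$ would lie in $X$ (forcing $Q\subset X$, a contradiction) or be a higher cube of $B$ already removed. Each of the $\#A+\#B$ steps inserts or deletes a single top cube $Q$ of some dimension $p$, and by the rank analysis in the proof of Lemma \ref{2} its effect on $\b_q$ is determined entirely by whether $[\partial_p\widehat Q]$ vanishes in $H_{p-1}$ of the current complex: the move changes $\b_q$ by $0$ or $\pm1$, and nothing else.

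The crux—and the step I expect to be the main obstacle—is showing that these vanishing conditions stabilize. Every cube used lies in $\La_M$, so each relevant cycle $\partial_p\widehat Q$ is supported in $\La_M$, while all intermediate complexes coincide with one fixed infinite cubical complex outside $\La_M$ and with a fixed finite pattern inside. Each intermediate complex therefore has the form $Z_\infty\cap A_n$ for an $n$-independent infinite complex $Z_\infty$. Because every chain has finite support, $\partial_p\widehat Q$ bounds in $Z_\infty$ if and only if it bounds within $Z_\infty\cap A_n$ for all sufficiently large $n$; thus $[\partial_p\widehat Q]=0$ in $H_{p-1}(Z_\infty\cap A_n;\Q)$ for large $n$ exactly when $[\partial_p\widehat Q]=0$ in $H_{p-1}(Z_\infty;\Q)$. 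As there are finitely many steps, one threshold $n_\infty$ serves all of them, so for $n\ge n_\infty$ the effect of every move—and hence the total $\b_q(U_n)-\b_q(V_n)$—equals the integer $\De_\infty$ computed in the infinite complexes. Since $Z_\infty$ and these computations do not refer to the particular sequence, the same $\De_\infty$ works for every $\{A_n\}$, completing the argument.
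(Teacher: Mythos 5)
Your proposal is correct, but it takes a genuinely different route from the paper's proof. The paper compares $X_n:=X\cap\La_n$ and $Y_n:=Y\cap\La_n$ through their intersection $W=X\cap Y$: it writes $\b_q(X_n)-\b_q(Y_n)$ as a signed sum of relative rank differences $\rank Z_q(X_n)-\rank Z_q(W_n)$ and $\rank B_q(X_n)-\rank B_q(W_n)$ (plus the analogous terms for $Y$), and stabilizes each one by monotonicity: the cycle differences are bounded and nondecreasing because the natural map $Z_q(X_n)/Z_q(W_n)\to Z_q(X_m)/Z_q(W_m)$ is injective, while the boundary differences are nonnegative and eventually nonincreasing because the corresponding map on $B_q$-quotients is surjective for large $n\le m$; a general sequence $\{A_n\}$ is then handled by sandwiching $A_n$ between two boxes. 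You instead decompose the passage from $Y\cap A_n$ to $X\cap A_n$ into finitely many single-cube insertions and deletions (ordered by dimension so that every intermediate set gains or loses exactly one cube, with deleted cubes maximal), observe that each move changes $\b_q$ by $0$ or $\pm 1$ according to whether a fixed cycle $\partial_p\widehat Q$ bounds in the ambient complex at that step, and stabilize each such condition using that chains have finite support: the cycle bounds in the $n$-independent infinite complex if and only if it bounds in its intersection with $A_n$ for all large $n$, and failure to bound in the infinite complex forces failure in every window. This is sound, and it buys you some things the paper's argument does not: it is more elementary (no quotient modules), it produces $\De_\infty$ as an explicit sum of local $0,\pm1$ contributions computed in infinite complexes, and it treats an arbitrary sequence $\{A_n\}$ in a single pass, making the sequence-independence of $\De_\infty$ transparent since only the threshold $n_\infty$ depends on $\{A_n\}$. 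Conversely, the paper's monotonicity argument avoids your combinatorial bookkeeping (dimension ordering, maximality of deleted cubes; note that your deletion step implicitly uses the standard fact that an elementary cube contained in $X\cup Y$ must lie entirely in $X$ or entirely in $Y$, which deserves a line of proof), and its quotient-module formulation is the one that extends to persistent Betti numbers as in \cite{dhs}. Two small points to make explicit if you write this up: for a deletion the relevant vanishing condition lives in the complex after the cube is removed (the same stabilization argument applies verbatim), and the identification $\b_q=\dim_{\Q}H_q(\cdot;\Q)$ for the bounded sets $X\cap A_n$ should be justified by universal coefficients, since the paper defines $\b_q$ over $\Z$.
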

\begin{proof}
We first claim that there exist constants $\De_\infty\in\Z$ and $n'_\infty\in\N$
such that, for any $n\ge n'_\infty$,
\begin{equation}\label{17}
\b_q(X \cap \La_n) - \b_q(Y \cap \La_n) \;=\;  \De_\infty \;.
\end{equation}
Let $W$ be the cubical set $X\cap Y$.
From the definition of the $q$-th Betti numbers, the left hand side of \eqref{17} can be written as
\begin{equation}\label{18}
\begin{split}
\Big(\rank Z_q(X_n) -\rank Z_q(W_n)\Big) &- \Big(\rank Z_q(Y_n) -\rank Z_q(W_n)\Big) \\
-  \Big(\rank B_q(X_n) -\rank B_q(W_n)\Big) &+ \Big(\rank B_q(Y_n) -\rank B_q(W_n)\Big) \;,
\end{split}
\end{equation}
where $X_n:=X\cap \La_n$, $Y_n:=Y\cap \La_n$  and $W_n:=W\cap \La_n$.
Hence, it suffices to show the stability for each of braces in \eqref{18}. 
Here we only prove for $\rank Z_q(X_n) -\rank Z_q(W_n)$ and $\rank B_q(X_n) -\rank B_q(W_n)$  because of the symmetry.

From the definition of $W$ and the assumption on $X\triangle Y$,
it is clear that $W_n\subset X_n$ and
$\#X_n - \#W_n$ is bounded in $n$.
On the one hand, by the same way as in the proof of  Lemma \ref{2}, we have
\begin{equation*}
\rank Z_q(X_n) -\rank Z_q(W_n) \;\le\; \#X_n - \#W_n \;.
\end{equation*}
Therefore $\rank Z_q(X_n) - \rank Z_q(W_n)$ is bounded in $n$.

Fix integers $n, m$ with $n\le m$. Let us consider the map
\begin{align*}
f:\frac{Z_q(X_n)}{Z_q(W_n)} \;\longrightarrow\; \frac{Z_q(X_m)}{Z_q(W_m)} \;,
\qquad [c]\longmapsto [c] \;. 
\end{align*}
Since
\begin{equation*}
\begin{array}{ccc}
X_n & \subset & X_m \\
\rotatebox{90}{$\subset$} & & \rotatebox{90}{$\subset$}    \\
W_n & \subset & W_m
\end{array}
\end{equation*}
$f$ is well-defined. If $c$ belongs to $Z_q(X_n)\cap Z_q(W_m)$, then
$c$ can be written as
\begin{equation*}
c\;=\; \sum_{\widehat Q\in\widehat\K_q^d(X_n)}\lan c,\widehat Q\ran\widehat Q
\;=\; \sum_{\widehat Q\in\widehat\K_q^d(W_m)}\lan c,\widehat Q\ran\widehat Q \;.
\end{equation*}
Since $\{\widehat Q:\widehat Q\in \widehat\K_q^d(X_m)\}$ is a basis of $C^d_q(X_m)$
and $W_n$ is a subset of $X_m$, $\lan c,\widehat Q \ran$ vanishes if $\widehat Q\notin\widehat\K_q^d(W_n)$.
Therefore $c$ belongs to $Z_q(W_n)$.
Hence $f$ is injective and thereby $\rank Z_q(X_n) - \rank Z_q(W_n)$ is nondecreasing in $n$.

For the stability on $\rank B_q(X_n)-\rank B_q(W_n)$, we study the map
\begin{align*}
g:\frac{B_q(X_n)}{B_q(W_n)} \;\longrightarrow\; \frac{B_q(X_m)}{B_q(W_m)} \;,
\qquad [c]\longmapsto [c] \;
\end{align*}
for $n\leq m$. We claim that this map is surjective for sufficiently large $n$ and $m$ with $n\le m$. Let $\partial_{q+1}c\in B_q(X_m)$ with $c\in C_{q+1}(X_m)$. Then, by taking sufficiently large $n$ and $m$ with $n\le m$, $d$ can be expressed as $c=c_1+c_2$ with $c_1\in C_{q+1}(W_m)$ and $c_2\in C_{q+1}(X_n)$, since $X\triangle Y$ is bounded. This implies  the surjectivity of the map $g$. This claim leads to the nonincreasing property of $\rank B_q(X_n)-\rank B_q(W_n)$, and hence shows its stability.

We now deal with the general case.
Let $\{A_n:n\in\N\}$ be a sequence in $\B$ with $\varliminf A_n =\Z^d$.
Take $n_\infty\ge n'_\infty$ such that $\La_{n'_\infty}\subset A_n$
for any $n\ge n_\infty$. We now claim that
\begin{equation*}
\b_q(X \cap A_n) - \b_q(Y \cap A_n) \;=\;  \De_\infty \;,
\end{equation*}
for any $n\ge n_\infty$.
By the same reason as explained in the first part of the proof,
to conclude the claim it is enough to show that
\begin{equation}\label{last}
\rank Z_q(X \cap A_n) -\rank Z_q(W \cap A_n) \;=\; \rank Z_q(X_{n'_\infty}) -\rank Z_q(W_{n'_\infty}) \;,
\end{equation}
for any $n\ge n_\infty$.
Take $l\in\N$, depending on $n$, such that $A_n\subset \La_{n'_\infty + l}$.
Let us consider the injections
\begin{align}\label{last2}
\frac{Z_q(X_{n'_\infty})}{Z_q(W_{n'_\infty})} \;\hookrightarrow\; \frac{Z_q(X\cap A_n)}{Z_q(W\cap A_n)}
\;\hookrightarrow\; \frac{Z_q(X_{n'_\infty+l})}{Z_q(W_{n'_\infty+l})} \;.
\end{align}
From the first part of the proof, the rank of the lower set of \eqref{last2} coincides
with the one of the upper set,
and this concludes the proof of Proposition \ref{sta}.
The proofs for the other cases are similar.
\end{proof}

\begin{proof}[Proof of Theorem \ref{clt}]
To conclude the proof of Theorem \ref{clt},
from Theorem \ref{pclt}, it is enough to show that
the functional $\b_q(\om,t)$ satisfies three conditions stated in Theorem \ref{pclt}.

The translation invariance obviously follows from the definition of $\b_q(\cdot)$
and the stabilization property also follows from Proposition \ref{sta}.
Let $X^{0,n}(t)$ be the cubical set $X^n(t)\cap X^{*,n}(t)$.
Note  that $X^{0,n}(t)\subset X^n(t)$ and $X^{0,n}(t)\subset X^{*,n}(t)$.
Hence it follows from Lemma \ref{2} that
\begin{align}\label{26}
|D_O\b_q(\La_n, t)| & \;\le\; | \b_q(X^n(t)) - \b_q(X^{0,n}(t)) | 
+ | \b_q(X^{0,n}(t)) - \b_q(X^{*,n}(t)) | \notag\\
& \;\le\; 2\#\mathcal{N}^d \;=\; 2^{d+1} \;,
\end{align}
Therefore the bounded moment condition is shown, and it completes the proof of Theorem \ref{clt}.
\end{proof}

\begin{proof}[Proof of Theorem \ref{cltforlts}]
The proof of Theorem \ref{cltforlts} is similar to the one of Theorem \ref{clt}.
We need to prove three conditions stated in Theorem \ref{pclt} for the lifetime sum
as a family of real-valued random variables indexed by $\B$ .
We only check the stabilization property because other conditions are easy to check,
and are left to the readers.

Denote the lifetime sum $L_q^n$ by $L_q(\om, \La_n)$.
Then from the definition of $L_q^n$
\begin{align}\label{25}
D_OL_q(\La_n) \;=\; \int_0^1 \b_q(X^n(t)) - \b_q(X^{*,n}(t)) dt\;. 
\end{align}
From \eqref{26}, the integrand on the right hand side of \eqref{25} is bounded in $n$.
Moreover, from Proposition \ref{sta}, the integrand on the right hand side of \eqref{25}
converges as $n\to\infty$. Therefore it follows from the dominated convergence theorem
that $D_OL_q(\La_n)$ converges as $n\to\infty$, which proves the stabilization property
for the lifetime sum.

\end{proof}

\section{Conclusions}\label{sec:conclusions}
In this paper, we have shown the LLN and CLT
for Betti numbers and lifetime sums of random cubical sets in $\R^d$.
Then, a next interesting problem is to show those limiting theorems for persistence diagrams
on random cubical filtrations. Recently, the paper \cite{dhs} developed a random measure theory
which guarantees the limiting persistence diagrams by using limiting persistence Betti numbers.
Hence, to show the limiting persistence diagram,
we need to generalize the results in this paper to persistence Betti numbers. 
Furthermore, in connection with the paper \cite{ww},
it would be an interesting problem to derive the explicit expression of $\widehat\b_q(t)$.

\section*{Acknowledgements}
The authors wish to thank to Trinh Khanh Duy, Ippei Obayashi, and Tomoyuki Shirai
for their valuable suggestions and stimulating comments.


\begin{thebibliography}{99}
\bibitem{akkmop} Arai, Z., Kalies, W., Kokubu, H., Mischaikow, K., Oka, H., Pilarczyk, P.: 
A Database schema for the analysis of global dynamics of  multiparameter systems.
SIAM J. APPL. DYN. SYST. {\bf 8}, 757--789 (2008)

\bibitem{bk} Bobrowski, O., Kahle, M.: Topology of random geometric complexes: a survey. J. Appl. and Comput. Topology (2018). https://doi.org/10.1007/s41468-017-0010-0

\bibitem{cf1} Costa, A., Farber, M.: Large random simplicial complexes, I.
Preprint https://arxiv.org/abs/1503.06285
 
\bibitem{cf2} Costa, A., Farber, M.: Large random simplicial complexes, II;
the fundamental group. Preprint https://arxiv.org/abs/1509.04837
  
\bibitem{dhs} Duy, T.K., Hiraoka, Y., Shirai, T.:
Limit theorems for persistence diagrams. Accepted in Annals of Applied Probability. 
  
\bibitem{eh} Edelsbrunner, H., Harer, H.: Computational topology. An introduction.
Amer. Math. Soc., Providence (2010)

\bibitem{elz} Edelsbrunner, H., Letscher, D., Zomorodian, A.: 
Topological persistence and simplification. Discrete Comput. Geom. {\bf 28}, 511--533 (2002)

\bibitem{f} Frieze, A.M.: On the value of a random minimum spanning tree problem. 
Discrete Appl. Math. {\bf 10}, 47--56 (1985)
	
\bibitem{g} Grimmett, G.: Percolation. Springer-Verlag, Berlin (1999)

\bibitem{hk} Hino, M., Kanazawa, S.: Asymptotic behavior of lifetime sums for random simplicial complex processes. arXiv:1802.00548. 

\bibitem{hs1} Hiraoka, Y., Shirai, T.: Minimum spanning acycle and lifetime of
persistent homology in the Linial-Meshulam process. Preprint https://arxiv.org/abs/1503.05669

\bibitem{hs2} Hiraoka, Y., Shirai, T.: Tutte polynomials and random-cluster models
in Bernoulli cell complexes. Preprint https://arxiv.org/abs/arXiv:1602.04561
  
\bibitem{kmm} Kaczynski, T., Mischaikow, K., Mrozek, M.:
Computational Homology. Springer-Verlag, New York (2004)

\bibitem{k} Kahle, M.: Topology of random simplicial complexes: a survey. 
In: Algebraic topology: applications and new directions.
Contemp. Math. {\bf 620} (Tillmann, U., Galatius, S., Sinha, D. eds.).
pp. 201--221. Amer. Math. Soc., Providence (2014) 

\bibitem{kimura} 
Kimura, M., Obayashi, I., Takeichi, Y., Murao, R., Hiraoka, Y.: Non-empirical identification of trigger sites in heterogeneous processes using persistent homology. Scientific Reports 8, 3553 (2018).
	
\bibitem{kms} Kurtuldu, H., Mischaikow, K., Schatz, M.:
Extensive Scaling from Computational Homology and Karhunen-Lo{\`e}ve
Decomposition Analysis of Rayleigh-B{\'e}nard Convection Experiments.
Phys. Revi. Lett. {\bf 107}, 034503 (2011)

\bibitem{lm} Linial, N., Meshulam, R.:
Homological connectivity of random $2$-complexes.
Combinatorica {\bf 26}, 475--487 (2006)
  
\bibitem{mr} Meester, R., Roy, R.: Continuum Percolation.
Cambridge University Press, Cambridge (1996)
  
\bibitem{p} Penrose, M.D.: A central limit theorem with applications to percolation, 
epidemics and Boolean model. Ann. Probab. {\bf 29}, 1515--1546 (2001)

\bibitem{ww} Werman, M., Wright, M.L.: Intrinsic volumes of random cubical complexes.
Discrete Comput. Geom. {\bf 56}, 93--113 (2016)
  
\bibitem{ysa} Yogeshwaran, D., Subag, E., Adler, R.J.: Random geometric complexes
  in the thermodynamic regime. Probab. Theory Relat. Fields (2015). doi:10.1007/s00440-015-0678-9
  
\bibitem{zc} Zomorodian, A., Carlsson, G.: Computing persistent homology.
Discrete Comput. Geom. {\bf 33}, 249--274 (2005)

\end{thebibliography}
\end{document}